\newtheorem{theorem}{Theorem}[section]
\newtheorem{lemma}[theorem]{Lemma}
\theoremstyle{definition}
\theoremstyle{remark}
\title{An explicit edge-coloring of $K_n$ with six colors on every $K_5$}
\author{Alex Cameron}
\address[Alex Cameron]{Department of Mathematics, Statistics, and Computer Science, University of Illinois at Chicago}
\email{acamer4@uic.edu}
\date{}
\begin{document}
\maketitle

\begin{abstract}
For fixed integers $p$ and $q$, let $f(n,p,q)$ denote the minimum number of colors needed to color all of the edges of the complete graph $K_n$ such that no clique of $p$ vertices spans fewer than $q$ distinct colors. A construction is given which shows that $f(n,5,6) \leq n^{1/2+o(1)}$. This improves upon the best known probabilistic upper bound of $O\left(n^{3/5}\right)$ given by Erd\H{o}s and Gy{\'a}rf{\'a}s. It is also shown that $f(n,5,6) = \Omega\left(n^{1/2}\right)$.
\end{abstract}

\section{Introduction}

Let $K_n$ denote the complete graph on $n$ vertices. Fix positive integers $p$ and $q$ such that $1 \leq q \leq {p \choose 2}$. A $(p,q)$-coloring of $K_n$ is any edge-coloring such that every copy of $K_p$ contains edges of at least $q$ distinct colors. Let $f(n,p,q)$ denote the minimum number of colors needed to give a $(p,q)$-coloring of $K_n$.

Paul Erd\H{o}s originally introduced the function $f(n,p,q)$ in 1981 \cite{erdos1981}, but it was not studied systematically until 1997 when Erd\H{o}s and Andr{\'a}s Gy{\'a}rf{\'a}s \cite{erdos1997} looked at the growth rate of $f(n,p,q)$ as $n \rightarrow \infty$ for fixed values of $p$ and $q$. In this paper, they used the Lov{\'a}s Local Lemma to give a general upper bound, \[f(n,p,q) = O\left(n^{\frac{p-2}{1-q+{p \choose 2}}}\right).\]

Since then, this function has been studied for various values of $p$ and $q$. Here we improve the probabilistic upper bound of $f(n,5,6)$ by giving an explicit $(5,6)$-coloring of $K_n$ that uses few colors. The new upper bound comes close to matching the lower bound in order of magnitude.

\begin{theorem}
\label{main}
As $n \rightarrow \infty$, \[\left( \frac{5}{6}n - \frac{95}{144} \right)^{1/2} \leq f(n,5,6) \leq n^{1/2}2^{O \left(\sqrt{\log n}\log \log n \right)}.\]
\end{theorem}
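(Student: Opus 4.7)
My plan is to fix any $(5,6)$-coloring of $K_n$ with $C := f(n,5,6)$ colors and localize at a vertex. At any vertex $v$ the $n-1$ incident edges carry at most $C$ colors, so pigeonhole produces a color $c$ whose color-$c$ neighborhood $V_c(v)$ has size at least $(n-1)/C$. The key structural observation is that the induced coloring on $V_c(v)$ must be a $(4,5)$-coloring of $K_{|V_c(v)|}$: for any four vertices $u_1,\ldots,u_4 \in V_c(v)$, the $K_5$ on $\{v,u_1,\ldots,u_4\}$ already has four edges of color $c$, so the $(5,6)$-constraint forces the six edges $u_iu_j$ to span at least five distinct colors other than $c$. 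A short structural argument then shows that each color class in a $(4,5)$-coloring is triangle-free, has maximum degree two, and contains no path of length three; hence each color class is a disjoint union of isolated vertices, single edges, and paths of length two, and therefore has at most $\lfloor 2d/3 \rfloor$ edges on $d$ vertices. This yields $f(d,4,5) = \Omega(d)$, which combined with $|V_c(v)| \geq (n-1)/C$ produces a quadratic inequality of the form $C^2 = \Omega(n)$. The precise constants $5/6$ and $95/144$ should emerge from a careful accounting, using in addition the fact that color $c$ itself can appear on at most one edge inside $V_c(v)$.

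\textbf{Upper bound.} For the explicit upper bound I would construct a $(5,6)$-coloring using $n^{1/2+o(1)}$ colors. The form $n^{1/2} \cdot 2^{O(\sqrt{\log n}\log\log n)}$ strongly hints at a two-stage construction: a base algebraic coloring handling $K_n$ for $n$ a prime power and using $\sim\sqrt{n}$ colors, amplified by a recursive or product-type construction iterated $\sim\sqrt{\log n}$ times, each iteration contributing only a polylogarithmic multiplicative overhead. A natural template for the base is to identify vertices with a finite-field object such as $\mathbb{F}_q^d$ with $q \approx \sqrt{n}$ and color the edge $\{u,v\}$ by an algebraic invariant of the pair (for instance a low-degree polynomial in $u-v$ or in the pair $(u,v)$). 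The $(5,6)$-property is then verified by a case analysis over the partitions of the ten edges of a $K_5$ into at most five color classes, each of which translates into an algebraic identity over $\mathbb{F}_q$ that one must rule out. The recursive/product step is arranged so that any $K_5$ whose vertices straddle several parts of the decomposition inherits at least six colors from at least one level of the composition.

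\textbf{Main obstacle.} I expect the upper bound construction to be the main obstacle. Designing a base algebraic coloring that simultaneously rules out every bad color-coincidence pattern on a $K_5$ is delicate, since the list of such patterns is long and the associated algebraic identities are non-trivial. Once a base is in hand, the recursive composition must be carefully designed to preserve the $(5,6)$-property on $K_5$'s that mix vertices from multiple parts of the decomposition, typically by disjointifying the palettes used at different scales and verifying that mixed $K_5$'s still inherit enough colors. The lower bound, by contrast, follows a relatively standard pigeonhole-plus-structural-analysis template once the reduction to the $(4,5)$-coloring on $V_c(v)$ is identified.
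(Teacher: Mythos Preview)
Your lower-bound approach matches the paper's: localize at a vertex, pass to a large monochromatic neighborhood, and observe that the induced coloring there is a $(4,5)$-coloring, then invoke a linear lower bound on $f(d,4,5)$. One quibble: the structural argument you outline (each color class is a disjoint union of edges and length-two paths, hence has at most $\lfloor 2d/3\rfloor$ edges) yields only $f(d,4,5)\geq \tfrac{3}{4}(d-1)$, not the $\tfrac{5}{6}(d-1)$ required for the stated constants $5/6$ and $95/144$. The paper does not prove the sharper bound either; it simply cites the Erd\H{o}s--Gy\'arf\'as inequality $f(m,4,5)\geq\tfrac{5}{6}(m-1)$ and plugs it into the same localization lemma you describe.

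The upper bound is where your proposal has a genuine gap. The paper's construction is not a recursive amplification of an algebraic base iterated $\sqrt{\log n}$ times; it is a single product $\varphi\times\chi$ of two explicit colorings. The factor $\chi$ is a modified version of Mubayi's algebraic $(4,4)$-coloring on $\mathbb{F}_q^2$ (with $q\approx\sqrt{n}$), and it alone contributes the $O(\sqrt{n})$ colors. The factor $\varphi$ is a modified version of the Conlon--Fox--Lee--Sudakov coloring on binary strings of length $\beta^2$ with $\beta\approx\sqrt{\log n}$; its block structure, not iteration, is what produces the $2^{O(\sqrt{\log n}\log\log n)}$ overhead. The $(5,6)$-property is then verified by an extensive case analysis: a catalogue of small edge-colored configurations is shown to be forbidden (some by $\varphi$, some by $\chi$, some only by the modifications $\varphi_2$ and $\chi_2$), after which a computer search over all $\leq 5$-colorings of $K_5$ confirms that each contains a forbidden configuration. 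Neither the two specific colorings, nor the modifications that make their product work, nor the configuration-elimination strategy appear in your sketch, and the ``recursive composition'' template you propose does not lead to them.
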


The lower bound comes from the following lemma, a generalization of an argument used by Erd\H{o}s and Gy\'arf\'as \cite{erdos1997} and stated explicitly as equation 11 in \cite{CFLS}.

\begin{lemma}
Let $t=f(n,p,q)$, then \[f \left(\left\lceil \frac{n-1}{t} \right\rceil,p-1,q-1 \right) \leq t.\]
\end{lemma}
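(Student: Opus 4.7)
The plan is to fix a $(p,q)$-coloring of $K_n$ using exactly $t = f(n,p,q)$ colors and then extract from it a $(p-1,q-1)$-coloring of a complete subgraph on $m := \lceil (n-1)/t \rceil$ vertices using no new colors. The mechanism is a pigeonhole argument at a single vertex: pick any vertex $v \in V(K_n)$; its $n-1$ incident edges are partitioned among at most $t$ color classes, so some color $c^\ast$ appears on at least $\lceil (n-1)/t \rceil = m$ edges at $v$. Let $S$ denote a set of $m$ neighbors of $v$ all joined to $v$ by edges of color $c^\ast$, and consider the induced edge-coloring on $K_n[S] \cong K_m$.

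Next I would verify that this induced coloring is a $(p-1,q-1)$-coloring. Let $T \subseteq S$ with $|T| = p-1$, and look at the clique on $T \cup \{v\}$, which is a copy of $K_p$ in $K_n$. By hypothesis on the original coloring, the edges of this $K_p$ carry at least $q$ distinct colors. Partition these edges into the $p-1$ ``spokes'' from $v$ to $T$ (all monochromatic of color $c^\ast$ by construction of $S$) and the $\binom{p-1}{2}$ edges of $K_p[T]$. The spokes contribute exactly one color to the palette, so the edges of $K_p[T]$ must contribute at least $q-1$ further distinct colors; in particular, $K_p[T]$ spans at least $q-1$ colors. Since $T$ was arbitrary, every $K_{p-1}$ in $S$ uses at least $q-1$ colors, which is precisely the $(p-1,q-1)$ condition. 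Therefore $f(m,p-1,q-1) \leq t$.

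There is essentially no serious obstacle here: the only subtlety worth flagging is that the color $c^\ast$ might or might not already appear inside $K_p[T]$, but this is irrelevant because the bound on ``further distinct colors'' uses only that the total palette of $K_p$ has size at least $q$ and that the spokes contribute at most one color to it, regardless of overlap. The argument also makes no use of the specific structure of the $(p,q)$-coloring, which is exactly why it applies as a general reduction of the type described in \cite{erdos1997, CFLS}.
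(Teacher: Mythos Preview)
Your proof is correct and follows exactly the same approach as the paper's: pigeonhole at a single vertex to find a large monochromatic neighborhood, then observe that the induced coloring on that neighborhood is a $(p-1,q-1)$-coloring because adjoining the central vertex to any $(p-1)$-clique yields a $K_p$ whose spokes contribute only one color. The paper's version is simply terser, leaving the verification step implicit.
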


\begin{proof}
Suppose we have a $(p,q)$-coloring of $K_n$ with $t$ colors. Fix some vertex $x$, then at least $\left\lceil \frac{n-1}{t} \right\rceil$ vertices must appear in a monochromatic neighborhood of $x$. The number of colors $t$ must be enough to give a $(p-1,q-1)$-coloring on this set.
\end{proof}

Erd\H{o}s and Gy\'{a}rf\'{a}s showed that $\frac{5}{6}(n-1) \leq f(n,4,5)$ \cite{erdos1997}. This, combined with the lemma, gives the stated lower bound in Theorem~\ref{main}.

The construction providing the upper bound combines two existing constructions with some modification. The first was given recently by Conlon, Fox, Lee, and Sudakov \cite{CFLS} and was originally used to show \[f(n,p,p-1) \leq 2^{16p(\log n)^{1-1/(p-2)}\log \log n}.\] The second construction is the ``algebraic" part of the $(4,4)$-coloring given by Mubayi in \cite{mubayi2004}.

After defining the construction in the next section, we will demonstrate that it avoids many different configurations of colored edges on five or fewer vertices. By ruling these cases out, a simple algorithm is used to show that no copy of $K_5$ can span fewer than six distinct colors.

\section{The Construction}

The construction is the product of two separate edge-colorings, $\varphi \times \chi$, where $\varphi$ is a modified version of a particular instance of the general construction given by Conlon, Fox, Lee, and Sudakov (CFLS) \cite{CFLS} and $\chi$ is a modified version the the algebraic portion of the construction given by Mubayi \cite{mubayi2004}.

\subsection{The Modified CFLS Coloring}

The Modified CFLS coloring will be a product of two colorings, $\varphi = \varphi_1 \times \varphi_2$. The first, $\varphi_1$ is a particular simple case of the more general $(p,p-1)$-coloring given in \cite{CFLS}. The modification, $\varphi_2$, was originally given by this author and Emily Heath for our recent $(5,5)$-coloring \cite{55}.

Let $n = 2^{\beta^2}$ for some positive integer $\beta$. Associate each vertex of $K_n$ with a unique binary string of length $\beta^2$. That is, we may assume that our vertex set is \[V = \{0,1\}^{\beta^2}.\] For any vertex $v \in V$, let $v^{(i)}$ denote the $i$th block of bits of length $\beta$ in $v$ so that \[v = (v^{(1)},\ldots,v^{(\beta)})\] where each $v^{(i)} \in \{0,1\}^{\beta}$.

Between two vertices $x,y \in V$, the CFLS coloring is defined by \[\varphi_1(x,y) = \left( \left( i, \{x^{(i)},y^{(i)}\}\right), i_1, \ldots, i_{\beta}\right)\] where $i$ is the first index for which $x^{(i)} \neq y^{(i)}$, and for each $k=1,\ldots, \beta$, $i_k = 0$ if $x^{(k)} = y^{(k)}$ and otherwise is the first index at which a bit of $x^{(k)}$ differs from the corresponding bit in $y^{(k)}$.

For convenience, when discussing any edge color $\alpha$, we will let $\alpha_0$ denote the first coordinate of the color (of the form $(i,\{x^{(i)},y^{(i)}\})$) and let $\alpha_k$ denote the index of the first bit difference of the $k$th block for $k=1,\ldots,\beta$.

We will define the coloring $\varphi_2$ on the same set of vertices as the CFLS coloring, $V=\{0,1\}^{\beta^2}$. However, we will also need to consider the vertices as an ordered set. Consider each vertex to be an integer represented in binary. Then order the vertices by the standard ordering of the integers. That is, $x<y$ if and only if the first bit at which $x$ and $y$ differ is zero in $x$ and one in $y$.

Note that each $\beta$-block is a binary representation of an integer from $0$ to $2^{\beta}-1$,  so these blocks can be considered ordered in the same way. Moreover, note that if $x<y$ and if the first $\beta$-block at which $x$ and $y$ differ is $i$, then it must be the case that $x^{(i)}<y^{(i)}$.

Let $x,y \in V$ such that $x<y$. We define the second coloring as \[\varphi_2(x,y) = \left( \delta_1(x,y),\ldots,\delta_{\beta}(x,y) \right)\] where for each $i$, \[\delta_i(x,y) = \left\{
        \begin{array}{ll}
            -1 & \quad x^{(i)} > y^{(i)} \\
            +1 & \quad x^{(i)} \leq y^{(i)}
        \end{array}
    \right. .\]
    
This construction uses $2^{\beta}$ colors. Therefore, the modified CFLS coloring, $\varphi = \varphi_1 \times \varphi_2$, uses \[\beta^{\beta+1}2^{3\beta} = \sqrt{\log n}^{\sqrt{\log n}+1}2^{3\sqrt{\log n}} = 2^{O \left( \sqrt{\log n} \log \log n\right)}\] colors.

\subsection{The Modified Algebraic Coloring}

We will now define the algebraic part of the construction, $\chi = \chi_1 \times \chi_2$. The first part of this construction, $\chi_1$, is exactly the algebraic part of the $(4,4)$-coloring given by Mubayi \cite{mubayi2004}. The second part, $\chi_2$, is a modification original to this paper but based on a similar modification used to alter the algebraic portion of the $(5,5)$-coloring in \cite{55}.

Let $n = q^2$ where $q$ is some odd prime power. Associate each vertex of $K_n$ with a unique vector in the space $\mathbb{F}_q^2$ over the finite field with $q$ elements. Between any two vectors $x=(x_1,x_2)$ and $y=(y_1,y_2)$, we define the color $\chi_1$ of the edge between them as \[ \chi_1(xy) = \left(x_1y_1 - x_2 - y_2, \delta(x_1,y_1) \right) \] where \[\delta(x_1,y_1) = \left\{
        \begin{array}{ll}
            0 & \quad x_1 = y_1 \\
            1 & \quad x_1 \neq y_1
        \end{array}
    \right. .\]
Here, all algebraic operations are taken to be the standard ones defined by the finite field.
    
The modification to this coloring, $\chi_2$, requires that we give the elements of $\mathbb{F}_q^2$ some linear order. When we combine the algebraic part of the coloring with the modified CFLS coloring, this order will agree with the order put on the binary strings, but for now we just assume that there is some linear order.

For each element $\alpha \in \mathbb{F}_q$ let $G_{\alpha}$ be the graph with vertex set $\mathbb{F}_q \setminus \{\alpha\}$ such that \[xy \in E(G) \iff x+y = 2 \alpha.\] It is straightforward to show that the edges of $G_{\alpha}$ form a complete matching of the vertices. The vertices can therefore be partitioned into two sets, $S_{\alpha}$ and $T_{\alpha}$, such that no edge lies inside either set.

For two distinct elements, $\alpha, \beta \in \mathbb{F}_q$, define the function \[f_{\alpha}(\beta) = \left\{
        \begin{array}{ll}
            S & \quad \beta \in S_{\alpha} \\
            T & \quad \beta \in T_{\alpha}
        \end{array}
    \right. .\]
Now we can define $\chi_2$ for two vectors, $x < y$, as \[ \chi_2(xy) = \left(f_{x_1}(y_1),f_{y_1}(x_1)\right).\]

The coloring, $\chi_1$, gives at most $2q$ colors on $q^2$ vertices, and the modification, $\chi_2$, gives four colors. So overall the modified algebraic coloring $\chi$ uses at most $8q = 8 \sqrt{n}$ colors.

\subsection{Combining the Constructions}

Begin with $n=q^2$ for some odd prime power $q$, and associate each vertex with a distinct vector of $\mathbb{F}_q^2$ as in the previous section. Give some linear order for the elements of the base field, $\mathbb{F}_q$. To each $\alpha \in \mathbb{F}_q$ we associate the unique element $\alpha' \in \{0,1\}^{\left\lceil \log q \right\rceil}$ which represents in binary the rank of $\alpha$ under the this linear order.

Let $\beta$ be the minimum positive integer for which \[2\left\lceil \log q \right\rceil \leq \beta^2. \] To a vertex of $K_n$ associated with vector $(x_1,x_2) \in \mathbb{F}_q^2$, we also associate the binary string $(x_1',x_2',0) \in \{0,1\}^{\beta^2}$ where for each $i$, $x_i'$ is the binary representation of the rank of $x_i$, and $0$ denotes a string of $\beta^2 -2 \left\lceil \log q \right\rceil$ zeros.

The edge-coloring $\varphi \times \chi$ is then given by applying $\chi$ to the vectors and $\varphi$ to the binary strings. Since \[\beta = \Theta \left( \sqrt{2\log q} \right) =  \Theta \left( \sqrt{\log n} \right),\]  it follows that the number of colors used in this combined coloring is at most \[8q \beta 2^{2 \beta} = n^{1/2} 2^{O\left(\sqrt{\log n} \log \log n \right)}\] colors. This upper bound on the number of colors generalizes to all $n$ by the standard density of primes argument \cite{mubayi2004, perelli1984}.

\section{Configurations Avoided by CFLS}

In \cite{55}, we showed that the modified CFLS coloring, $\varphi$, avoids several possible configurations of edge colors on small cliques. Several of these cases, including monochromatic odd cycles, are covered by Lemma~\ref{colorcycle}.

\subsection{General ``color cycle" configurations}

Let $p$ and $q$ be positive integers. Assume that we have a copy of $K_p$ under an edge-coloring \[c:E(K_p) \rightarrow \{C_1,\ldots,C_q\}.\] Define an auxiliary digraph $D$ on the set of edge colors, $V(D) = \{C_1,C_2,\ldots,C_q\}$, such that $C_i \rightarrow C_j \in E(D)$ if and only if there exist vertices $v_1,\ldots,v_k \in V(K_p)$ for $k \geq 3$ such that \[c(v_1v_2) =c(v_2v_3) = \cdots = c(v_{k-1}v_k) = C_i\] and $c(v_kv_1) = C_j$.

Now, color the directed edges of $D$ ``Odd" or ``Even" depending on the parity of the number of vertices $k$ that gives the directed edge. Note that multiedges with different parities are possible in $D$.

\begin{figure}
        \begin{subfigure}[b]{0.18\textwidth}
          \centering
          \resizebox{\linewidth}{!}{
          \begin{tikzpicture}
\draw[very thick, black] (1.0, 0.0) -- (6.123233995736766e-17, 1.0);
\draw[very thick, black] (1.0, 0.0) -- (-1.0, 1.2246467991473532e-16);
\draw[very thick, green] (1.0, 0.0) -- (-1.8369701987210297e-16, -1.0);
\filldraw [black] (1.0, 0.0) circle (1pt);

\draw[very thick, red] (6.123233995736766e-17, 1.0) -- (-1.0, 1.2246467991473532e-16);
\draw[very thick, red] (6.123233995736766e-17, 1.0) -- (-1.8369701987210297e-16, -1.0);
\filldraw [black] (6.123233995736766e-17, 1.0) circle (1pt);

\draw[very thick, green] (-1.0, 1.2246467991473532e-16) -- (-1.8369701987210297e-16, -1.0);
\filldraw [black] (-1.0, 1.2246467991473532e-16) circle (1pt);

\filldraw [black] (-1.8369701987210297e-16, -1.0) circle (1pt);

\end{tikzpicture}
          }
     \end{subfigure}
     \begin{subfigure}[b]{0.18\textwidth}
          \centering
          \resizebox{\linewidth}{!}{
          \begin{tikzpicture}
\draw[very thick, black] (1.0, 0.0) -- (0.30901699437494745, 0.95105651629515353);
\draw[very thick, red] (1.0, 0.0) -- (-0.80901699437494734, 0.58778525229247325);
\filldraw [black] (1.0, 0.0) circle (1pt);

\draw[very thick, black] (0.30901699437494745, 0.95105651629515353) -- (-0.80901699437494734, 0.58778525229247325);
\filldraw [black] (0.30901699437494745, 0.95105651629515353) circle (1pt);

\draw[very thick, black] (-0.80901699437494734, 0.58778525229247325) -- (-0.80901699437494745, -0.58778525229247303);
\draw[very thick, red] (-0.80901699437494734, 0.58778525229247325) -- (0.30901699437494723, -0.95105651629515364);
\filldraw [black] (-0.80901699437494734, 0.58778525229247325) circle (1pt);

\draw[very thick, red] (-0.80901699437494745, -0.58778525229247303) -- (0.30901699437494723, -0.95105651629515364);
\filldraw [black] (-0.80901699437494745, -0.58778525229247303) circle (1pt);

\filldraw [black] (0.30901699437494723, -0.95105651629515364) circle (1pt);

\end{tikzpicture}
          }
     \end{subfigure}
      \begin{subfigure}[b]{0.18\textwidth}
          \centering
          \resizebox{\linewidth}{!}{
          \begin{tikzpicture}
\draw[very thick, black] (1.0, 0.0) -- (0.30901699437494745, 0.95105651629515353);
\draw[very thick, black] (1.0, 0.0) -- (-0.80901699437494745, -0.58778525229247303);

\draw[very thick, red] (-0.80901699437494734, 0.58778525229247325) -- (1.0, 0.0);
\draw[very thick, red] (-0.80901699437494734, 0.58778525229247325) -- (0.30901699437494723, -0.95105651629515364);

\draw[very thick, green] (0.30901699437494745, 0.95105651629515353) -- (-0.80901699437494734, 0.58778525229247325);
\draw[very thick, green] (0.30901699437494745, 0.95105651629515353) -- (-0.80901699437494745, -0.58778525229247303);

\draw[very thick, blue] (-0.80901699437494745, -0.58778525229247303) -- (-0.80901699437494734, 0.58778525229247325);
\draw[very thick, blue] (-0.80901699437494745, -0.58778525229247303) -- (0.30901699437494723, -0.95105651629515364);

\draw[very thick, yellow] (0.30901699437494723, -0.95105651629515364) -- (1.0, 0.0);
\draw[very thick, yellow] (0.30901699437494723, -0.95105651629515364) -- (0.30901699437494745, 0.95105651629515353);

\filldraw [black] (1.0, 0.0) circle (1pt);

\filldraw [black] (0.30901699437494745, 0.95105651629515353) circle (1pt);

\filldraw [black] (-0.80901699437494734, 0.58778525229247325) circle (1pt);

\filldraw [black] (-0.80901699437494745, -0.58778525229247303) circle (1pt);

\filldraw [black] (0.30901699437494723, -0.95105651629515364) circle (1pt);

\end{tikzpicture}
          }
     \end{subfigure}
     \begin{subfigure}[b]{0.18\textwidth}
          \centering
          \resizebox{\linewidth}{!}{
          \begin{tikzpicture}
\draw[very thick, black] (1.0, 0.0) -- (0.30901699437494745, 0.95105651629515353);

\draw[very thick, red] (-0.80901699437494734, 0.58778525229247325) -- (1.0, 0.0);
\draw[very thick, red] (-0.80901699437494734, 0.58778525229247325) -- (0.30901699437494723, -0.95105651629515364);

\draw[very thick, green] (0.30901699437494745, 0.95105651629515353) -- (-0.80901699437494745, -0.58778525229247303);

\draw[very thick, blue] (-0.80901699437494745, -0.58778525229247303) -- (-0.80901699437494734, 0.58778525229247325);
\draw[very thick, blue] (-0.80901699437494745, -0.58778525229247303) -- (0.30901699437494723, -0.95105651629515364);

\draw[very thick, black] (0.30901699437494723, -0.95105651629515364) -- (1.0, 0.0);
\draw[very thick, green] (0.30901699437494723, -0.95105651629515364) -- (0.30901699437494745, 0.95105651629515353);

\filldraw [black] (1.0, 0.0) circle (1pt);

\filldraw [black] (0.30901699437494745, 0.95105651629515353) circle (1pt);

\filldraw [black] (-0.80901699437494734, 0.58778525229247325) circle (1pt);

\filldraw [black] (-0.80901699437494745, -0.58778525229247303) circle (1pt);

\filldraw [black] (0.30901699437494723, -0.95105651629515364) circle (1pt);

\end{tikzpicture}
          }
     \end{subfigure}
     \caption{Four configurations that each contain a forbidden ``color cycle."}
     \label{colorcycles}
 \end{figure}
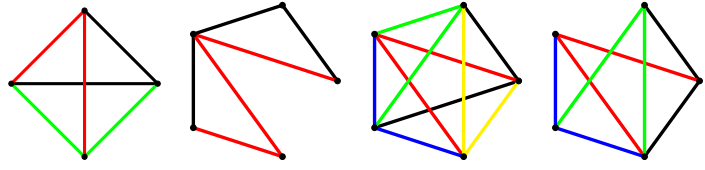

\begin{lemma}
\label{colorcycle}
The CFLS coloring avoids any edge-colored copy of $K_p$ for which the auxiliary digraph on the colors spanned by that clique contains a directed cycle with at least one Odd edge.
\end{lemma}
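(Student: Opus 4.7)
My plan is to attach a single numerical invariant to each CFLS color, namely the first-block index $b(\alpha)$ defined by $\alpha_0 = (b(\alpha), \{x^{(b(\alpha))}, y^{(b(\alpha))}\})$, and to show that every directed edge of the auxiliary digraph $D$ imposes a monotonicity constraint on $b$, with strict inequality precisely for the Odd edges. Traversing any directed cycle that contains an Odd edge then yields $b(C) < b(C)$, a contradiction.

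The first step is to unpack what a monochromatic path looks like under $\varphi_1$. If $v_1,\dots,v_k$ is a path whose every edge has CFLS color $C$, with $b(C) = i$ and $C_0 = (i,\{a,b\})$, then because every consecutive pair shares the same $\alpha_0$-coordinate, each such pair agrees on blocks $1, \dots, i-1$ and carries block-$i$ values forming the set $\{a,b\}$ (so the two ends disagree on block $i$). Consequently, $v_1^{(i)}, v_2^{(i)}, \dots, v_k^{(i)}$ alternates between $a$ and $b$, and all the $v_j$ share a common prefix on blocks $1,\dots, i-1$.

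The second step is to read off $b(C')$, where $C'$ is the color of the closing edge $v_k v_1$. Since $v_1$ and $v_k$ agree on blocks $1, \dots, i-1$, the first block at which they differ is at least $i$, so $b(C') \geq b(C)$. If $k$ is odd (so the edge $C \to C'$ is Odd in $D$), then $v_1^{(i)} = v_k^{(i)}$ by the alternation, and the first difference is pushed into a later block, giving the strict inequality $b(C') > b(C)$. If $k$ is even, then $v_1^{(i)} \neq v_k^{(i)}$ lie in $\{a,b\}$, and $b(C') = b(C)$.

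Applied to a directed cycle $C_{i_1} \to C_{i_2} \to \cdots \to C_{i_r} \to C_{i_1}$ that contains at least one Odd edge, summing these (in)equalities around the cycle forces $b(C_{i_1}) < b(C_{i_1})$, the desired contradiction. Self-loops $C \to C$ and monochromatic odd cycles are both handled uniformly by the same argument. The only mildly delicate point is the unpacking step, which follows directly from the definition of $\varphi_1$; the remaining coordinates $\alpha_1, \dots, \alpha_\beta$ of the CFLS color play no role and can be safely ignored.
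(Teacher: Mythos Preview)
Your proof is correct and takes a genuinely different route from the paper's. The paper fixes $i = b(C_1)$ once (where $C_1 \to C_2$ is the designated Odd edge) and then propagates the condition ``$(C_j)_i = 0$'' around the cycle; that is, it tracks the vanishing of the $i$th coordinate $\alpha_i$ of each color, explicitly using the auxiliary coordinates $\alpha_1,\dots,\alpha_\beta$ in the CFLS tuple. You instead attach to each color only the first-block index $b(\alpha)$ read off from $\alpha_0$ and show that $b$ is non-decreasing along every arc of $D$ and strictly increasing along Odd arcs, so any directed cycle with an Odd arc yields $b(C)<b(C)$ without ever singling out a starting edge. Your argument has the small bonus that it never touches $\alpha_1,\dots,\alpha_\beta$, so it in fact proves the lemma for the coarser coloring that records only $\alpha_0$; the paper's argument, as written, leans on those extra coordinates. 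The underlying combinatorics (alternation of block-$i$ values along a monochromatic path) is the same in both proofs, and both are short.
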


\begin{proof}
Suppose that colors $C_1,\ldots,C_m$ make such a directed cycle: \[C_1 \rightarrow C_2, C_2 \rightarrow C_3, \ldots, C_m \rightarrow C_1 \in E(D)\] such that (without loss of generality) $C_1 \rightarrow C_2$ is colored Odd. Then there exist vertices $v_1,\ldots,v_k \in V(K_p)$ for some odd integer $k \geq 3$ for which $\varphi_1(v_iv_{i+1}) = C_1$ for $i=1,\ldots,k-1$ and $\varphi_1(v_kv_1)=C_2$. Let the zero coordinate of the color $C_1$ be $(i,\{x,y\})$. Without loss, assume that $v_1^{(i)} = x$. It follows that $v_k^{(i)}=x$ as well. Therefore, the $i$th coordinate of $C_2$ is zero.

Now, each subsequent directed edge $C_j \rightarrow C_{j+1}$ for $j=2,\ldots,m$, regardless of color, forces the $i$th coordinate of the ``head" color to be zero as well. To see this assume that $C_j$ is zero in its $i$th coordinate. A monochromatic path in color $C_j$, $u_1 u_2 \cdots u_l$, implies that $u_1$ agrees with $u_l$ at $i$. Therefore, the $i$th coordinate of $C_{j+1}$ must also be zero. The same must be true for $C_1$ since this is a directed cycle, a contradiction.
\end{proof}

For the $(5,6)$-coloring we use Lemma~\ref{colorcycle} to eliminate the configurations shown in Figure~\ref{colorcycles} as well as monochromatic odd cycles.

\subsection{Configurations containing a monochromatic $P_3$}

Assume that we have an edge-colored $K_5$ that contains a monochromatic $P_3$ on vertices $abcd$ in color Black. The edges $ac$ and $bd$ cannot be Black since color classes are bipartite in CFLS. So we color edge $ac$ Red. Let $\text{Black}_0 = (i,\{x,y\})$, then $\text{Red}_i = 0$. Therefore, any Red edge from the Black $P_3$ to vertex $e$ fixes the value of $e^{(i)}$ as either $x$ or $y$. CFLS would then forbid any third color from having an edge between an $x$ and a $y$ as well as one between two vertices that agree at $i$.

There are 16 possible configurations that fit this description when we do not consider those in which the third color never touches vertex $e$ (these cases are summarized separately). Figure~\ref{TheP3s} presents these 16 configurations.

\begin{figure}

\begin{subfigure}[b]{0.18\textwidth}
          \centering
          \resizebox{\linewidth}{!}{
\begin{tikzpicture}
\draw[very thick, black] (1.0, 0.0) -- (0.30901699437494745, 0.95105651629515353);
\draw[very thick, red] (1.0, 0.0) -- (-0.80901699437494734, 0.58778525229247325);
\draw[very thick, red] (1.0, 0.0) -- (0.30901699437494723, -0.95105651629515364);
\filldraw [black] (1.0, 0.0) circle (1pt);
\draw[very thick, black] (0.30901699437494745, 0.95105651629515353) -- (-0.80901699437494734, 0.58778525229247325);
\draw[very thick, green] (0.30901699437494745, 0.95105651629515353) -- (-0.80901699437494745, -0.58778525229247303);
\filldraw [black] (0.30901699437494745, 0.95105651629515353) circle (1pt);
\draw[very thick, black] (-0.80901699437494734, 0.58778525229247325) -- (-0.80901699437494745, -0.58778525229247303);
\filldraw [black] (-0.80901699437494734, 0.58778525229247325) circle (1pt);
\draw[very thick, green] (-0.80901699437494745, -0.58778525229247303) -- (0.30901699437494723, -0.95105651629515364);
\filldraw [black] (-0.80901699437494745, -0.58778525229247303) circle (1pt);
\filldraw [black] (0.30901699437494723, -0.95105651629515364) circle (1pt);
\end{tikzpicture}
}

\end{subfigure}
\begin{subfigure}[b]{0.18\textwidth}
          \centering
          \resizebox{\linewidth}{!}{
\begin{tikzpicture}
\draw[very thick, black] (1.0, 0.0) -- (0.30901699437494745, 0.95105651629515353);
\draw[very thick, red] (1.0, 0.0) -- (-0.80901699437494734, 0.58778525229247325);
\draw[very thick, green] (1.0, 0.0) -- (-0.80901699437494745, -0.58778525229247303);
\draw[very thick, red] (1.0, 0.0) -- (0.30901699437494723, -0.95105651629515364);
\filldraw [black] (1.0, 0.0) circle (1pt);
\draw[very thick, black] (0.30901699437494745, 0.95105651629515353) -- (-0.80901699437494734, 0.58778525229247325);
\filldraw [black] (0.30901699437494745, 0.95105651629515353) circle (1pt);
\draw[very thick, black] (-0.80901699437494734, 0.58778525229247325) -- (-0.80901699437494745, -0.58778525229247303);
\draw[very thick, green] (-0.80901699437494734, 0.58778525229247325) -- (0.30901699437494723, -0.95105651629515364);
\filldraw [black] (-0.80901699437494734, 0.58778525229247325) circle (1pt);
\filldraw [black] (-0.80901699437494745, -0.58778525229247303) circle (1pt);
\filldraw [black] (0.30901699437494723, -0.95105651629515364) circle (1pt);
\end{tikzpicture}
}

\end{subfigure}
\begin{subfigure}[b]{0.18\textwidth}
          \centering
          \resizebox{\linewidth}{!}{
\begin{tikzpicture}
\draw[very thick, black] (1.0, 0.0) -- (0.30901699437494745, 0.95105651629515353);
\draw[very thick, red] (1.0, 0.0) -- (-0.80901699437494734, 0.58778525229247325);
\draw[very thick, red] (1.0, 0.0) -- (0.30901699437494723, -0.95105651629515364);
\filldraw [black] (1.0, 0.0) circle (1pt);
\draw[very thick, black] (0.30901699437494745, 0.95105651629515353) -- (-0.80901699437494734, 0.58778525229247325);
\draw[very thick, green] (0.30901699437494745, 0.95105651629515353) -- (0.30901699437494723, -0.95105651629515364);
\filldraw [black] (0.30901699437494745, 0.95105651629515353) circle (1pt);
\draw[very thick, black] (-0.80901699437494734, 0.58778525229247325) -- (-0.80901699437494745, -0.58778525229247303);
\draw[very thick, green] (-0.80901699437494734, 0.58778525229247325) -- (0.30901699437494723, -0.95105651629515364);
\filldraw [black] (-0.80901699437494734, 0.58778525229247325) circle (1pt);
\filldraw [black] (-0.80901699437494745, -0.58778525229247303) circle (1pt);
\filldraw [black] (0.30901699437494723, -0.95105651629515364) circle (1pt);
\end{tikzpicture}
}

\end{subfigure}
\begin{subfigure}[b]{0.18\textwidth}
          \centering
          \resizebox{\linewidth}{!}{
\begin{tikzpicture}
\draw[very thick, black] (1.0, 0.0) -- (0.30901699437494745, 0.95105651629515353);
\draw[very thick, red] (1.0, 0.0) -- (-0.80901699437494734, 0.58778525229247325);
\draw[very thick, red] (1.0, 0.0) -- (0.30901699437494723, -0.95105651629515364);
\filldraw [black] (1.0, 0.0) circle (1pt);
\draw[very thick, black] (0.30901699437494745, 0.95105651629515353) -- (-0.80901699437494734, 0.58778525229247325);
\filldraw [black] (0.30901699437494745, 0.95105651629515353) circle (1pt);
\draw[very thick, black] (-0.80901699437494734, 0.58778525229247325) -- (-0.80901699437494745, -0.58778525229247303);
\draw[very thick, green] (-0.80901699437494734, 0.58778525229247325) -- (0.30901699437494723, -0.95105651629515364);
\filldraw [black] (-0.80901699437494734, 0.58778525229247325) circle (1pt);
\draw[very thick, green] (-0.80901699437494745, -0.58778525229247303) -- (0.30901699437494723, -0.95105651629515364);
\filldraw [black] (-0.80901699437494745, -0.58778525229247303) circle (1pt);
\filldraw [black] (0.30901699437494723, -0.95105651629515364) circle (1pt);
\end{tikzpicture}
}

\end{subfigure}

\begin{subfigure}[b]{0.18\textwidth}
          \centering
          \resizebox{\linewidth}{!}{
\begin{tikzpicture}
\draw[very thick, black] (1.0, 0.0) -- (0.30901699437494745, 0.95105651629515353);
\draw[very thick, red] (1.0, 0.0) -- (-0.80901699437494734, 0.58778525229247325);
\filldraw [black] (1.0, 0.0) circle (1pt);
\draw[very thick, black] (0.30901699437494745, 0.95105651629515353) -- (-0.80901699437494734, 0.58778525229247325);
\draw[very thick, red] (0.30901699437494745, 0.95105651629515353) -- (0.30901699437494723, -0.95105651629515364);
\filldraw [black] (0.30901699437494745, 0.95105651629515353) circle (1pt);
\draw[very thick, black] (-0.80901699437494734, 0.58778525229247325) -- (-0.80901699437494745, -0.58778525229247303);
\draw[very thick, green] (-0.80901699437494734, 0.58778525229247325) -- (0.30901699437494723, -0.95105651629515364);
\filldraw [black] (-0.80901699437494734, 0.58778525229247325) circle (1pt);
\draw[very thick, green] (-0.80901699437494745, -0.58778525229247303) -- (0.30901699437494723, -0.95105651629515364);
\filldraw [black] (-0.80901699437494745, -0.58778525229247303) circle (1pt);
\filldraw [black] (0.30901699437494723, -0.95105651629515364) circle (1pt);
\end{tikzpicture}
}

\end{subfigure}
\begin{subfigure}[b]{0.18\textwidth}
          \centering
          \resizebox{\linewidth}{!}{
\begin{tikzpicture}
\draw[very thick, black] (1.0, 0.0) -- (0.30901699437494745, 0.95105651629515353);
\draw[very thick, red] (1.0, 0.0) -- (-0.80901699437494734, 0.58778525229247325);
\draw[very thick, green] (1.0, 0.0) -- (0.30901699437494723, -0.95105651629515364);
\filldraw [black] (1.0, 0.0) circle (1pt);
\draw[very thick, black] (0.30901699437494745, 0.95105651629515353) -- (-0.80901699437494734, 0.58778525229247325);
\draw[very thick, red] (0.30901699437494745, 0.95105651629515353) -- (0.30901699437494723, -0.95105651629515364);
\filldraw [black] (0.30901699437494745, 0.95105651629515353) circle (1pt);
\draw[very thick, black] (-0.80901699437494734, 0.58778525229247325) -- (-0.80901699437494745, -0.58778525229247303);
\filldraw [black] (-0.80901699437494734, 0.58778525229247325) circle (1pt);
\draw[very thick, green] (-0.80901699437494745, -0.58778525229247303) -- (0.30901699437494723, -0.95105651629515364);
\filldraw [black] (-0.80901699437494745, -0.58778525229247303) circle (1pt);
\filldraw [black] (0.30901699437494723, -0.95105651629515364) circle (1pt);
\end{tikzpicture}
}

\end{subfigure}
\begin{subfigure}[b]{0.18\textwidth}
          \centering
          \resizebox{\linewidth}{!}{
\begin{tikzpicture}
\draw[very thick, black] (1.0, 0.0) -- (0.30901699437494745, 0.95105651629515353);
\draw[very thick, red] (1.0, 0.0) -- (-0.80901699437494734, 0.58778525229247325);
\draw[very thick, green] (1.0, 0.0) -- (-0.80901699437494745, -0.58778525229247303);
\filldraw [black] (1.0, 0.0) circle (1pt);
\draw[very thick, black] (0.30901699437494745, 0.95105651629515353) -- (-0.80901699437494734, 0.58778525229247325);
\draw[very thick, red] (0.30901699437494745, 0.95105651629515353) -- (0.30901699437494723, -0.95105651629515364);
\filldraw [black] (0.30901699437494745, 0.95105651629515353) circle (1pt);
\draw[very thick, black] (-0.80901699437494734, 0.58778525229247325) -- (-0.80901699437494745, -0.58778525229247303);
\filldraw [black] (-0.80901699437494734, 0.58778525229247325) circle (1pt);
\draw[very thick, green] (-0.80901699437494745, -0.58778525229247303) -- (0.30901699437494723, -0.95105651629515364);
\filldraw [black] (-0.80901699437494745, -0.58778525229247303) circle (1pt);
\filldraw [black] (0.30901699437494723, -0.95105651629515364) circle (1pt);
\end{tikzpicture}
}

\end{subfigure}
\begin{subfigure}[b]{0.18\textwidth}
          \centering
          \resizebox{\linewidth}{!}{
\begin{tikzpicture}
\draw[very thick, black] (1.0, 0.0) -- (0.30901699437494745, 0.95105651629515353);
\draw[very thick, red] (1.0, 0.0) -- (-0.80901699437494734, 0.58778525229247325);
\filldraw [black] (1.0, 0.0) circle (1pt);
\draw[very thick, black] (0.30901699437494745, 0.95105651629515353) -- (-0.80901699437494734, 0.58778525229247325);
\draw[very thick, green] (0.30901699437494745, 0.95105651629515353) -- (-0.80901699437494745, -0.58778525229247303);
\draw[very thick, red] (0.30901699437494745, 0.95105651629515353) -- (0.30901699437494723, -0.95105651629515364);
\filldraw [black] (0.30901699437494745, 0.95105651629515353) circle (1pt);
\draw[very thick, black] (-0.80901699437494734, 0.58778525229247325) -- (-0.80901699437494745, -0.58778525229247303);
\draw[very thick, green] (-0.80901699437494734, 0.58778525229247325) -- (0.30901699437494723, -0.95105651629515364);
\filldraw [black] (-0.80901699437494734, 0.58778525229247325) circle (1pt);
\filldraw [black] (-0.80901699437494745, -0.58778525229247303) circle (1pt);
\filldraw [black] (0.30901699437494723, -0.95105651629515364) circle (1pt);
\end{tikzpicture}
}

\end{subfigure}

\begin{subfigure}[b]{0.18\textwidth}
          \centering
          \resizebox{\linewidth}{!}{
\begin{tikzpicture}
\draw[very thick, black] (1.0, 0.0) -- (0.30901699437494745, 0.95105651629515353);
\draw[very thick, red] (1.0, 0.0) -- (-0.80901699437494734, 0.58778525229247325);
\draw[very thick, green] (1.0, 0.0) -- (0.30901699437494723, -0.95105651629515364);
\filldraw [black] (1.0, 0.0) circle (1pt);
\draw[very thick, black] (0.30901699437494745, 0.95105651629515353) -- (-0.80901699437494734, 0.58778525229247325);
\draw[very thick, green] (0.30901699437494745, 0.95105651629515353) -- (-0.80901699437494745, -0.58778525229247303);
\draw[very thick, red] (0.30901699437494745, 0.95105651629515353) -- (0.30901699437494723, -0.95105651629515364);
\filldraw [black] (0.30901699437494745, 0.95105651629515353) circle (1pt);
\draw[very thick, black] (-0.80901699437494734, 0.58778525229247325) -- (-0.80901699437494745, -0.58778525229247303);
\filldraw [black] (-0.80901699437494734, 0.58778525229247325) circle (1pt);
\filldraw [black] (-0.80901699437494745, -0.58778525229247303) circle (1pt);
\filldraw [black] (0.30901699437494723, -0.95105651629515364) circle (1pt);
\end{tikzpicture}
}

\end{subfigure}
\begin{subfigure}[b]{0.18\textwidth}
          \centering
          \resizebox{\linewidth}{!}{
\begin{tikzpicture}
\draw[very thick, black] (1.0, 0.0) -- (0.30901699437494745, 0.95105651629515353);
\draw[very thick, red] (1.0, 0.0) -- (-0.80901699437494734, 0.58778525229247325);
\draw[very thick, green] (1.0, 0.0) -- (0.30901699437494723, -0.95105651629515364);
\filldraw [black] (1.0, 0.0) circle (1pt);
\draw[very thick, black] (0.30901699437494745, 0.95105651629515353) -- (-0.80901699437494734, 0.58778525229247325);
\draw[very thick, green] (0.30901699437494745, 0.95105651629515353) -- (0.30901699437494723, -0.95105651629515364);
\filldraw [black] (0.30901699437494745, 0.95105651629515353) circle (1pt);
\draw[very thick, black] (-0.80901699437494734, 0.58778525229247325) -- (-0.80901699437494745, -0.58778525229247303);
\draw[very thick, red] (-0.80901699437494734, 0.58778525229247325) -- (0.30901699437494723, -0.95105651629515364);
\filldraw [black] (-0.80901699437494734, 0.58778525229247325) circle (1pt);
\filldraw [black] (-0.80901699437494745, -0.58778525229247303) circle (1pt);
\filldraw [black] (0.30901699437494723, -0.95105651629515364) circle (1pt);
\end{tikzpicture}
}

\end{subfigure}
\begin{subfigure}[b]{0.18\textwidth}
          \centering
          \resizebox{\linewidth}{!}{
\begin{tikzpicture}
\draw[very thick, black] (1.0, 0.0) -- (0.30901699437494745, 0.95105651629515353);
\draw[very thick, red] (1.0, 0.0) -- (-0.80901699437494734, 0.58778525229247325);
\draw[very thick, green] (1.0, 0.0) -- (-0.80901699437494745, -0.58778525229247303);
\draw[very thick, green] (1.0, 0.0) -- (0.30901699437494723, -0.95105651629515364);
\filldraw [black] (1.0, 0.0) circle (1pt);
\draw[very thick, black] (0.30901699437494745, 0.95105651629515353) -- (-0.80901699437494734, 0.58778525229247325);
\filldraw [black] (0.30901699437494745, 0.95105651629515353) circle (1pt);
\draw[very thick, black] (-0.80901699437494734, 0.58778525229247325) -- (-0.80901699437494745, -0.58778525229247303);
\draw[very thick, red] (-0.80901699437494734, 0.58778525229247325) -- (0.30901699437494723, -0.95105651629515364);
\filldraw [black] (-0.80901699437494734, 0.58778525229247325) circle (1pt);
\filldraw [black] (-0.80901699437494745, -0.58778525229247303) circle (1pt);
\filldraw [black] (0.30901699437494723, -0.95105651629515364) circle (1pt);
\end{tikzpicture}
}

\end{subfigure}
\begin{subfigure}[b]{0.18\textwidth}
          \centering
          \resizebox{\linewidth}{!}{
\begin{tikzpicture}
\draw[very thick, black] (1.0, 0.0) -- (0.30901699437494745, 0.95105651629515353);
\draw[very thick, red] (1.0, 0.0) -- (-0.80901699437494734, 0.58778525229247325);
\draw[very thick, green] (1.0, 0.0) -- (0.30901699437494723, -0.95105651629515364);
\filldraw [black] (1.0, 0.0) circle (1pt);
\draw[very thick, black] (0.30901699437494745, 0.95105651629515353) -- (-0.80901699437494734, 0.58778525229247325);
\filldraw [black] (0.30901699437494745, 0.95105651629515353) circle (1pt);
\draw[very thick, black] (-0.80901699437494734, 0.58778525229247325) -- (-0.80901699437494745, -0.58778525229247303);
\draw[very thick, red] (-0.80901699437494734, 0.58778525229247325) -- (0.30901699437494723, -0.95105651629515364);
\filldraw [black] (-0.80901699437494734, 0.58778525229247325) circle (1pt);
\draw[very thick, green] (-0.80901699437494745, -0.58778525229247303) -- (0.30901699437494723, -0.95105651629515364);
\filldraw [black] (-0.80901699437494745, -0.58778525229247303) circle (1pt);
\filldraw [black] (0.30901699437494723, -0.95105651629515364) circle (1pt);
\end{tikzpicture}
}

\end{subfigure}

\begin{subfigure}[b]{0.18\textwidth}
          \centering
          \resizebox{\linewidth}{!}{
\begin{tikzpicture}
\draw[very thick, black] (1.0, 0.0) -- (0.30901699437494745, 0.95105651629515353);
\draw[very thick, red] (1.0, 0.0) -- (-0.80901699437494734, 0.58778525229247325);
\filldraw [black] (1.0, 0.0) circle (1pt);
\draw[very thick, black] (0.30901699437494745, 0.95105651629515353) -- (-0.80901699437494734, 0.58778525229247325);
\draw[very thick, green] (0.30901699437494745, 0.95105651629515353) -- (-0.80901699437494745, -0.58778525229247303);
\draw[very thick, green] (0.30901699437494745, 0.95105651629515353) -- (0.30901699437494723, -0.95105651629515364);
\filldraw [black] (0.30901699437494745, 0.95105651629515353) circle (1pt);
\draw[very thick, black] (-0.80901699437494734, 0.58778525229247325) -- (-0.80901699437494745, -0.58778525229247303);
\draw[very thick, red] (-0.80901699437494734, 0.58778525229247325) -- (0.30901699437494723, -0.95105651629515364);
\filldraw [black] (-0.80901699437494734, 0.58778525229247325) circle (1pt);
\filldraw [black] (-0.80901699437494745, -0.58778525229247303) circle (1pt);
\filldraw [black] (0.30901699437494723, -0.95105651629515364) circle (1pt);
\end{tikzpicture}
}

\end{subfigure}
\begin{subfigure}[b]{0.18\textwidth}
          \centering
          \resizebox{\linewidth}{!}{
\begin{tikzpicture}
\draw[very thick, black] (1.0, 0.0) -- (0.30901699437494745, 0.95105651629515353);
\draw[very thick, red] (1.0, 0.0) -- (-0.80901699437494734, 0.58778525229247325);
\filldraw [black] (1.0, 0.0) circle (1pt);
\draw[very thick, black] (0.30901699437494745, 0.95105651629515353) -- (-0.80901699437494734, 0.58778525229247325);
\draw[very thick, green] (0.30901699437494745, 0.95105651629515353) -- (-0.80901699437494745, -0.58778525229247303);
\filldraw [black] (0.30901699437494745, 0.95105651629515353) circle (1pt);
\draw[very thick, black] (-0.80901699437494734, 0.58778525229247325) -- (-0.80901699437494745, -0.58778525229247303);
\draw[very thick, red] (-0.80901699437494734, 0.58778525229247325) -- (0.30901699437494723, -0.95105651629515364);
\filldraw [black] (-0.80901699437494734, 0.58778525229247325) circle (1pt);
\draw[very thick, green] (-0.80901699437494745, -0.58778525229247303) -- (0.30901699437494723, -0.95105651629515364);
\filldraw [black] (-0.80901699437494745, -0.58778525229247303) circle (1pt);
\filldraw [black] (0.30901699437494723, -0.95105651629515364) circle (1pt);
\end{tikzpicture}
}

\end{subfigure}
\begin{subfigure}[b]{0.18\textwidth}
          \centering
          \resizebox{\linewidth}{!}{
\begin{tikzpicture}
\draw[very thick, black] (1.0, 0.0) -- (0.30901699437494745, 0.95105651629515353);
\draw[very thick, red] (1.0, 0.0) -- (-0.80901699437494734, 0.58778525229247325);
\draw[very thick, green] (1.0, 0.0) -- (0.30901699437494723, -0.95105651629515364);
\filldraw [black] (1.0, 0.0) circle (1pt);
\draw[very thick, black] (0.30901699437494745, 0.95105651629515353) -- (-0.80901699437494734, 0.58778525229247325);
\draw[very thick, green] (0.30901699437494745, 0.95105651629515353) -- (-0.80901699437494745, -0.58778525229247303);
\filldraw [black] (0.30901699437494745, 0.95105651629515353) circle (1pt);
\draw[very thick, black] (-0.80901699437494734, 0.58778525229247325) -- (-0.80901699437494745, -0.58778525229247303);
\filldraw [black] (-0.80901699437494734, 0.58778525229247325) circle (1pt);
\draw[very thick, red] (-0.80901699437494745, -0.58778525229247303) -- (0.30901699437494723, -0.95105651629515364);
\filldraw [black] (-0.80901699437494745, -0.58778525229247303) circle (1pt);
\filldraw [black] (0.30901699437494723, -0.95105651629515364) circle (1pt);
\end{tikzpicture}
}

\end{subfigure}
\begin{subfigure}[b]{0.18\textwidth}
          \centering
          \resizebox{\linewidth}{!}{
\begin{tikzpicture}
\draw[very thick, black] (1.0, 0.0) -- (0.30901699437494745, 0.95105651629515353);
\draw[very thick, red] (1.0, 0.0) -- (-0.80901699437494734, 0.58778525229247325);
\draw[very thick, green] (1.0, 0.0) -- (-0.80901699437494745, -0.58778525229247303);
\filldraw [black] (1.0, 0.0) circle (1pt);
\draw[very thick, black] (0.30901699437494745, 0.95105651629515353) -- (-0.80901699437494734, 0.58778525229247325);
\draw[very thick, green] (0.30901699437494745, 0.95105651629515353) -- (0.30901699437494723, -0.95105651629515364);
\filldraw [black] (0.30901699437494745, 0.95105651629515353) circle (1pt);
\draw[very thick, black] (-0.80901699437494734, 0.58778525229247325) -- (-0.80901699437494745, -0.58778525229247303);
\filldraw [black] (-0.80901699437494734, 0.58778525229247325) circle (1pt);
\draw[very thick, red] (-0.80901699437494745, -0.58778525229247303) -- (0.30901699437494723, -0.95105651629515364);
\filldraw [black] (-0.80901699437494745, -0.58778525229247303) circle (1pt);
\filldraw [black] (0.30901699437494723, -0.95105651629515364) circle (1pt);
\end{tikzpicture}
}

\end{subfigure}
\caption{CFLS forbids these 16 configurations, each contain a monochromatic $P_3$.}
\label{TheP3s}

\end{figure}
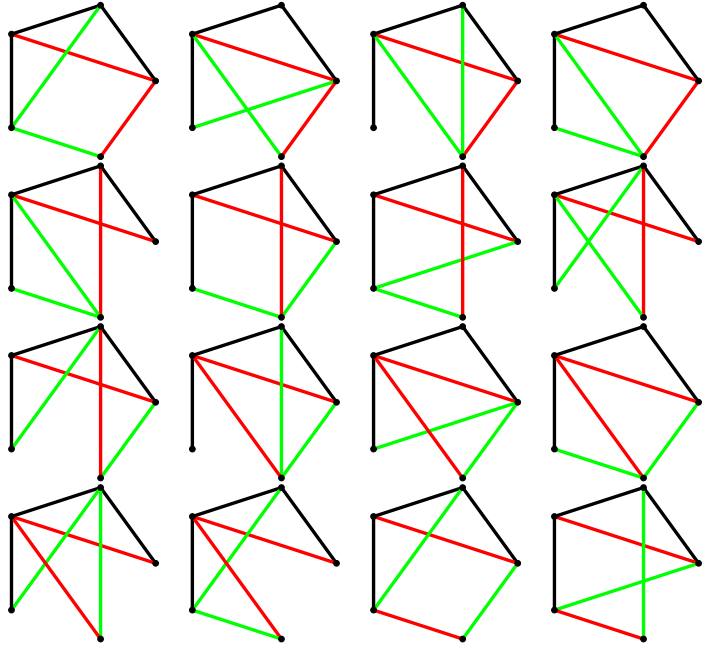

\subsection{Configurations containing an alternating $C_4$}

Next, consider configurations that contain a 2-colored $C_4$ with alternating colors. If the configuration also has two same-colored edges adjacent at the fifth vertex so that the other two endpoints are each incident to either endpoint of an edge of the $C_4$ (as shown by the edge-colored cliques in Figure~\ref{altC4s}), then we can say that under CFLS, the color from the fifth vertex must be distinct from any color spanned by the other four vertices. Moreover, none of these spanned colors can be incident with the fifth vertex.

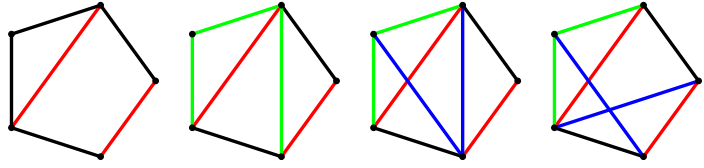
\begin{figure}
\begin{subfigure}[b]{0.18\textwidth}
          \centering
          \resizebox{\linewidth}{!}{
\begin{tikzpicture}
\draw[very thick, red] (1.0, 0.0) -- (0.30901699437494723, -0.95105651629515364);
\draw[very thick, black] (1.0, 0.0) -- (0.30901699437494745, 0.95105651629515353);
\draw[very thick, red] (0.30901699437494745, 0.95105651629515353) -- (-0.80901699437494745, -0.58778525229247303);
\draw[very thick, black] (-0.80901699437494745, -0.58778525229247303) -- (0.30901699437494723, -0.95105651629515364);

\draw[very thick, black] (-0.80901699437494734, 0.58778525229247325) -- (-0.80901699437494745, -0.58778525229247303);
\draw[very thick, black] (-0.80901699437494734, 0.58778525229247325) -- (0.30901699437494745, 0.95105651629515353);

\filldraw [black] (1.0, 0.0) circle (1pt);

\filldraw [black] (0.30901699437494745, 0.95105651629515353) circle (1pt);

\filldraw [black] (-0.80901699437494734, 0.58778525229247325) circle (1pt);

\filldraw [black] (-0.80901699437494745, -0.58778525229247303) circle (1pt);

\filldraw [black] (0.30901699437494723, -0.95105651629515364) circle (1pt);

\end{tikzpicture}
}
\end{subfigure}
\begin{subfigure}[b]{0.18\textwidth}
          \centering
          \resizebox{\linewidth}{!}{
\begin{tikzpicture}
\draw[very thick, red] (1.0, 0.0) -- (0.30901699437494723, -0.95105651629515364);
\draw[very thick, black] (1.0, 0.0) -- (0.30901699437494745, 0.95105651629515353);
\draw[very thick, red] (0.30901699437494745, 0.95105651629515353) -- (-0.80901699437494745, -0.58778525229247303);
\draw[very thick, black] (-0.80901699437494745, -0.58778525229247303) -- (0.30901699437494723, -0.95105651629515364);

\draw[very thick, green] (-0.80901699437494734, 0.58778525229247325) -- (-0.80901699437494745, -0.58778525229247303);
\draw[very thick, green] (-0.80901699437494734, 0.58778525229247325) -- (0.30901699437494745, 0.95105651629515353);
\draw[very thick, green] (0.30901699437494745, 0.95105651629515353) -- (0.30901699437494723, -0.95105651629515364);

\filldraw [black] (1.0, 0.0) circle (1pt);

\filldraw [black] (0.30901699437494745, 0.95105651629515353) circle (1pt);

\filldraw [black] (-0.80901699437494734, 0.58778525229247325) circle (1pt);

\filldraw [black] (-0.80901699437494745, -0.58778525229247303) circle (1pt);

\filldraw [black] (0.30901699437494723, -0.95105651629515364) circle (1pt);

\end{tikzpicture}
}
\end{subfigure}
\begin{subfigure}[b]{0.18\textwidth}
          \centering
          \resizebox{\linewidth}{!}{
\begin{tikzpicture}
\draw[very thick, red] (1.0, 0.0) -- (0.30901699437494723, -0.95105651629515364);
\draw[very thick, black] (1.0, 0.0) -- (0.30901699437494745, 0.95105651629515353);
\draw[very thick, red] (0.30901699437494745, 0.95105651629515353) -- (-0.80901699437494745, -0.58778525229247303);
\draw[very thick, black] (-0.80901699437494745, -0.58778525229247303) -- (0.30901699437494723, -0.95105651629515364);

\draw[very thick, green] (-0.80901699437494734, 0.58778525229247325) -- (-0.80901699437494745, -0.58778525229247303);
\draw[very thick, green] (-0.80901699437494734, 0.58778525229247325) -- (0.30901699437494745, 0.95105651629515353);

\draw[very thick, blue] (-0.80901699437494734, 0.58778525229247325) -- (0.30901699437494723, -0.95105651629515364);
\draw[very thick, blue] (0.30901699437494723, -0.95105651629515364) -- (0.30901699437494745, 0.95105651629515353);

\filldraw [black] (1.0, 0.0) circle (1pt);

\filldraw [black] (0.30901699437494745, 0.95105651629515353) circle (1pt);

\filldraw [black] (-0.80901699437494734, 0.58778525229247325) circle (1pt);

\filldraw [black] (-0.80901699437494745, -0.58778525229247303) circle (1pt);

\filldraw [black] (0.30901699437494723, -0.95105651629515364) circle (1pt);

\end{tikzpicture}
}
\end{subfigure}
\begin{subfigure}[b]{0.18\textwidth}
          \centering
          \resizebox{\linewidth}{!}{
\begin{tikzpicture}
\draw[very thick, red] (1.0, 0.0) -- (0.30901699437494723, -0.95105651629515364);
\draw[very thick, black] (1.0, 0.0) -- (0.30901699437494745, 0.95105651629515353);
\draw[very thick, red] (0.30901699437494745, 0.95105651629515353) -- (-0.80901699437494745, -0.58778525229247303);
\draw[very thick, black] (-0.80901699437494745, -0.58778525229247303) -- (0.30901699437494723, -0.95105651629515364);

\draw[very thick, green] (-0.80901699437494734, 0.58778525229247325) -- (-0.80901699437494745, -0.58778525229247303);
\draw[very thick, green] (-0.80901699437494734, 0.58778525229247325) -- (0.30901699437494745, 0.95105651629515353);

\draw[very thick, blue] (-0.80901699437494734, 0.58778525229247325) -- (0.30901699437494723, -0.95105651629515364);
\draw[very thick, blue] (1,0) -- (-0.80901699437494745, -0.58778525229247303);

\filldraw [black] (1.0, 0.0) circle (1pt);

\filldraw [black] (0.30901699437494745, 0.95105651629515353) circle (1pt);

\filldraw [black] (-0.80901699437494734, 0.58778525229247325) circle (1pt);

\filldraw [black] (-0.80901699437494745, -0.58778525229247303) circle (1pt);

\filldraw [black] (0.30901699437494723, -0.95105651629515364) circle (1pt);

\end{tikzpicture}
}
\end{subfigure}

\caption{Configurations containing an alternating $C_4$.}
\label{altC4s}
\end{figure}

\begin{lemma}
Let $a,b,c,d,e$ be distinct vertices such that $\varphi(ab)=\varphi(cd) = \alpha$, $\varphi(bc) = \varphi(ad) = \beta$, $\varphi(ae)=\varphi(de) = \gamma$, $\varphi(ac)=\pi_1$, $\varphi(bd) = \pi_2$, $\varphi(be) = \lambda_1$, and $\varphi(ce) = \lambda_2$. Then \[\gamma, \lambda_1, \lambda_2 \not \in \{\alpha, \beta, \pi_1, \pi_2\}.\]
\end{lemma}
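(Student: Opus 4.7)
My approach is to analyze the forced $\varphi_1$-block structure of the alternating $C_4$ and then check each of the twelve possible color coincidences $C=C'$ with $C\in\{\gamma,\lambda_1,\lambda_2\}$ and $C'\in\{\alpha,\beta,\pi_1,\pi_2\}$. Let $\alpha_0=(i,\{x,y\})$ and $\beta_0=(j,\{u,v\})$.

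I would begin by showing that the alternating $C_4$ constrains the block values sharply, splitting on the relative order of $i$ and $j$. In the case $i<j$, the agreement of $\alpha$-edges before block $i$ and $\beta$-edges before block $j$ gives $a^{(i)}=d^{(i)}=x$ and $b^{(i)}=c^{(i)}=y$, while at block $j$ two subconfigurations remain: either $a^{(j)}=b^{(j)}$ and $c^{(j)}=d^{(j)}$, or the ``twisted'' pairing $a^{(j)}=c^{(j)}$ and $b^{(j)}=d^{(j)}$. The latter is killed by $\varphi_2$: with the induced orderings $a<b$ and $d<c$, one computes $\delta_j(a,b)=+1$ and $\delta_j(d,c)=-1$, contradicting $\varphi_2(ab)=\varphi_2(cd)$. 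In the surviving subcase the unique configuration (up to swapping $u,v$) has $a^{(j)}=b^{(j)}=u$ and $c^{(j)}=d^{(j)}=v$. The case $i>j$ is similar but no branching arises, since the $\alpha$-edges agree before $i$ so $a^{(j)}=b^{(j)}$ and $c^{(j)}=d^{(j)}$ are automatic; the case $i=j$ is handled analogously using the block-$i$ structure of $\{a,b,c,d\}$.

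With the block structure pinned down, the twelve inequalities follow from short $\varphi_1$-block calculations. For $\gamma=\alpha$ (in the $i<j$ case): $\varphi_1(ae)=\varphi_1(de)=\alpha$ forces the block-$j$ coordinate $r_j(a,e)=r_j(d,e)=r_j(a,b)=0$, so $e^{(j)}$ would equal both $a^{(j)}=u$ and $d^{(j)}=v$, a contradiction. For $\gamma\in\{\pi_1,\pi_2\}$: the zero-coordinate block of $\pi_k$ is $i$ in the $i<j$ case and $j$ in the $i>j$ case, and one similarly forces the bit of $e^{(j)}$ at the first-differ position of $u,v$ to agree with both $u$'s and $v$'s bit there---impossible. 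For $\gamma=\beta$: Lemma~\ref{colorcycle} forbids the monochromatic triangle $ade$. For the $\lambda_k$ equalities: one derives the forced value of $e^{(i)}$ or $e^{(j)}$ and then checks that $\varphi_1(ae)$ and $\varphi_1(de)$ are pushed to have distinct zero-coordinate blocks, contradicting $\gamma=\varphi(ae)=\varphi(de)$. The symmetry $(a,b)\leftrightarrow(d,c)$, which swaps $\pi_1\leftrightarrow\pi_2$ and $\lambda_1\leftrightarrow\lambda_2$, halves the casework.

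The main obstacle is setting up the subcase analysis cleanly, particularly the $\varphi_2$ step that rules out the twisted configuration in the $i<j$ case---this is where the modification $\varphi_2$ becomes essential, since $\varphi_1$ alone leaves both subconfigurations possible and, under the twisted one, the $\gamma=\alpha$ coincidence does not lead to a direct $\varphi_1$ contradiction. Once that step is in place, every individual coincidence reduces to a short two- or three-line $\varphi_1$-block calculation producing an immediate contradiction.
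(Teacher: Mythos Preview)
Your route is workable but takes a substantial detour, and one of your structural claims is wrong. The paper's proof uses a single pivot that collapses your entire $i<j$, $i>j$, $i=j$ case split: instead of anchoring at the $\alpha$-block, it sets $\beta_0=(i,\{x,y\})$ and observes that $a^{(i)}=x$, $d^{(i)}=y$ are \emph{already distinct}. Since $\varphi(ae)=\varphi(de)=\gamma$, the coordinate $\gamma_i$ would have to be simultaneously zero (if $e^{(i)}\in\{x,y\}$ matches one of them) and nonzero (since it mismatches the other), so $e^{(i)}=z\notin\{x,y\}$. With that one fact in hand, every coincidence $\gamma=\alpha,\pi_1,\pi_2$ and $\lambda_k=\alpha,\beta,\pi_1,\pi_2$ reduces to the same impossibility: three distinct binary strings $x,y,z$ all pairwise having the same first index of difference. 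No comparison of $i$ and $j$, no twisted subcase, and no appeal to $\varphi_2$ is needed.

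Your assertion that $\varphi_2$ is ``essential'' here is therefore incorrect: the paper's argument uses only $\varphi_1$. The twisted configuration you identify in the $i<j$ case is real, but it is irrelevant once you work at the $\beta$-block rather than the $\alpha$-block --- the point is that $a$ and $d$ are the two vertices sharing the color $\gamma$ with $e$, and $\beta$ (not $\alpha$) is the color that separates $a$ from $d$. Your sketch of the $\lambda_k$ cases (``pushed to have distinct zero-coordinate blocks'') is also not quite the mechanism: the contradictions come from a single coordinate $\gamma_i$ or $(\lambda_k)_i$ being forced to two different nonzero values (or to be both zero and nonzero), not from the zero-coordinate block itself differing. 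If you reorganize around the $\beta$-block as above, each of the twelve checks becomes a two-line instance of the three-string argument, and the symmetry $(a,b)\leftrightarrow(d,c)$ you noted halves the work cleanly.
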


\begin{proof}
Let $\beta_0 = (i,\{x,y\})$. Without loss of generality, we may assume that $a^{(i)} = x$ and $d^{(i)}=y$. If $e^{(i)} = x$ or $e^{(i)} = y$, then we get that $\gamma_i = 0$ and $\gamma_i \neq 0$, a contradiction. Hence, \[e^{(i)} = z \not \in \{x,y\}.\] This alone shows that $\gamma \neq \beta$.

If $\gamma = \alpha$, then $\alpha_i \neq 0$ so it must be the case that $b^{(i)}=y$ and $c^{(i)}=x$. Therefore, three distinct binary strings, $x,y,z$, pairwise have the same first index of difference $\alpha_i$. This is impossible since two must either both be zero or both be one.

If $\gamma = \pi_1$, then $c^{(i)}=y$ and so $b^{(i)}=x$. Hence, the distinct binary strings $x,y,z$ again pairwise have the same first index of difference, $\gamma_i$, a contradiction. The same argument applies if $\gamma = \pi_2$.

Next, assume that $\lambda_1=\pi_1$. Since $e^{(i)}=z$ and $b^{(i)} \in \{x,y\}$, then $\lambda_1$ is nonzero at $i$. Since $\varphi(ac) = \lambda_1$ and $a^{(i)} = x$, then $c^{(i)} = y$ and so $b^{(i)}=x$. Hence, colors $\gamma$ and $\lambda_1$ must agree in coordinate $i$. This again gives us that $x,y,z$ all pairwise differ at the same first index, a contradiction. The same argument applies if $\lambda_2=\pi_2$.

If $\lambda_1=\pi_2$, then $b^{(i)}=x$ and so $c^{(i)}=y$. So again $\gamma$ and $\lambda_1$ agree at coordinate $i$. This again forces the contradiction with $x,y,z$. The same argument applies if $\lambda_2=\pi_1$.

Finally, note that $\lambda_1,\lambda_2 \neq \beta$ since $e^{(i)}=z$. If $\lambda_1 = \alpha$, then $b^{(i)}=y$ and so $\gamma$ and $\alpha$ agree at coordinate $i$ which gives the same contradiction as before. The same reasoning applies if $\lambda_2=\alpha$.
\end{proof}

\subsection{Additional configurations}

We finish this section by showing that the modified CFLS coloring eliminates the additional cases shown in Figure~\ref{additional}. We already showed in \cite{55} that this coloring eliminates Figures~\ref{fig:A} and~\ref{fig:E}, but we reproduce the proofs here for completeness.

\begin{lemma}
\label{forbidden}
The CFLS coloring forbids four distinct vertices $a,b,c,d \in V$ for which $\varphi_1(a,b)=\varphi_1(c,d)$ and $\varphi_1(a,c)=\varphi_1(a,d)$ (see Figure~\ref{fig:A}).
\end{lemma}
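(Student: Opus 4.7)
The approach is to suppose such vertices exist and derive a contradiction from the block-by-block structure of the CFLS color. Write $\alpha = \varphi_1(a,b) = \varphi_1(c,d)$ with $\alpha_0 = (i,\{x,y\})$ and $\beta = \varphi_1(a,c) = \varphi_1(a,d)$ with $\beta_0 = (j,\{u,v\})$, and after relabeling take $a^{(i)} = x$ (so $b^{(i)} = y$) and $a^{(j)} = u$. The two $\beta$-edges meeting at $a$ immediately give $c^{(j)} = d^{(j)} = v$, while $\varphi_1(c,d) = \alpha$ forces $i$ to be the first block at which $c$ and $d$ differ, so in particular $\{c^{(i)}, d^{(i)}\} = \{x,y\}$ and hence $c^{(i)} \ne d^{(i)}$. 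The plan is to contradict this last inequality in each of the three cases determined by the relative order of $i$ and $j$.

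If $i = j$, then $x = a^{(i)} = u$, so the constraints above read $c^{(i)} = d^{(i)} = v$, contradicting $c^{(i)} \ne d^{(i)}$. If $i < j$, then since $j$ is the first block where $a$ differs from $c$ (and from $d$), every block of smaller index agrees, so $c^{(i)} = a^{(i)} = d^{(i)}$, again contradicting $c^{(i)} \ne d^{(i)}$. If $i > j$, the handle is the later coordinate $\beta_i$: because $\beta = \varphi_1(a,c) = \varphi_1(a,d)$, the first bit at which $a^{(i)}$ differs from $c^{(i)}$ equals the first bit at which $a^{(i)}$ differs from $d^{(i)}$, so $c^{(i)}$ and $d^{(i)}$ agree with $a^{(i)} = x$ at exactly the same initial bits. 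Since $c^{(i)}, d^{(i)} \in \{x,y\}$, this forces $c^{(i)} = d^{(i)}$, the same contradiction.

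The subcases $i = j$ and $i < j$ follow directly from the zero-coordinate of the colors and amount to unpacking the definition of $\varphi_1$. The main obstacle, though a mild one, will be the case $i > j$, where block $i$ lies past the first-difference block of $\beta$ and so nothing can be read off $\beta_0$; here one must invoke the higher coordinate $\beta_i$, reasoning as in the proof of Lemma~\ref{colorcycle}. Once that coordinate-level constraint is in hand, all three branches collapse to the same clean contradiction between $c^{(i)} = d^{(i)}$ and $\{c^{(i)}, d^{(i)}\} = \{x,y\}$.
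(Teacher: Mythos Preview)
Your proof is correct, but it takes a longer path than necessary. The paper dispenses with the case split on the position of $j$ relative to $i$ entirely: after fixing $\alpha_0 = (i,\{x,y\})$ and observing (swapping $c$ and $d$ if needed, which is harmless since both hypotheses are symmetric in $c,d$) that $a^{(i)} = c^{(i)} = x$ and $b^{(i)} = d^{(i)} = y$, one simply reads off coordinate $i$ of the second color $\gamma = \varphi_1(a,c) = \varphi_1(a,d)$. The edge $ac$ forces $\gamma_i = 0$ while the edge $ad$ forces $\gamma_i \neq 0$, and that is the entire argument. Your case $i > j$ is exactly this observation; your cases $i = j$ and $i < j$ reach the same conclusion by going through the zero coordinate $\beta_0$ instead, which is valid but redundant. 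The payoff of the paper's route is brevity and uniformity: the later coordinates $\gamma_k$ of the CFLS color are there precisely so that one never has to track where the first-difference block of a second color sits relative to $i$.
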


\begin{proof}
Assume towards a contradiction that $\varphi_1(a,b)=\varphi_1(c,d)=\alpha$ and $\varphi_1(a,c)=\varphi_1(a,d)=\gamma$. Let $\alpha_0=(i,\{x,y\})$. Without loss of generality, $a^{(i)}=c^{(i)}=x$ and $b^{(i)}=d^{(i)}=y$. Then $\gamma_i = 0$ since $a$ and $c$ agree at $i$, but $\gamma_i \neq 0$ as $a$ and $d$ differ at $i$, a contradiction.
\end{proof}

\begin{lemma}
The modified CFLS coloring $\varphi$ forbids four distinct vertices $a,b,c,d \in V$ with $\varphi(a,b)=\varphi(c,d)$, $\varphi(a,c)=\varphi(b,d)$, and $\varphi(a,d)=\varphi(b,c)$ (see Figure~\ref{fig:E}).
\end{lemma}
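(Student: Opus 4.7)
The plan is to assume the configuration $\varphi(a,b)=\varphi(c,d)$, $\varphi(a,c)=\varphi(b,d)$, $\varphi(a,d)=\varphi(b,c)$ and derive a contradiction. Lemma~\ref{forbidden} does not apply here because no two of the equal pairs share a vertex, and in fact $\varphi_1$ alone does \emph{not} forbid this configuration (for instance in the $\beta=2$ case the four vertices $(00,00),(01,00),(00,01),(01,01)$ realize it). So the contradiction must come from the second factor $\varphi_2$; the strategy is to use $\varphi_1$ to pin down enough of the block structure of $a,b,c,d$ that the three $\varphi_2$-equalities become inconsistent.

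Set $\alpha_0=(i,\{x,y\})$ where $\alpha=\varphi(a,b)$, and normalize so that $a^{(i)}=x$ and $b^{(i)}=y$. The two possibilities $(c^{(i)},d^{(i)})\in\{(x,y),(y,x)\}$ are interchanged by the swap $c\leftrightarrow d$, which permutes the three perfect matchings $\{ab,cd\},\{ac,bd\},\{ad,bc\}$ among themselves and so preserves the hypothesis; hence assume $c^{(i)}=x$, $d^{(i)}=y$. Now write $\pi_0=(j,\{u,v\})$ for $\pi=\varphi(a,c)$; since $a^{(i)}=c^{(i)}$ we have $j\neq i$, and $\varphi_1(a,c)=\varphi_1(b,d)$ forces $b,d$ to first differ at block $j$ with $\{b^{(j)},d^{(j)}\}=\{u,v\}$. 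Normalize the block orderings so that $x<y$ and $u<v$.

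Now split on $j<i$ versus $j>i$. If $j<i$, the agreement of $a,b$ and of $c,d$ on all blocks before $i$ forces $a^{(j)}=b^{(j)}$ and $c^{(j)}=d^{(j)}$, hence $\{a^{(j)},c^{(j)}\}=\{u,v\}$. Using the further symmetry $(a,b)\leftrightarrow(c,d)$ I may take $a^{(j)}=b^{(j)}=u$ and $c^{(j)}=d^{(j)}=v$, which pins down the linear order $a<b<c<d$. Comparing the $i$-th coordinate of $\varphi_2$ on the edges $\{a,d\}$ and $\{b,c\}$, which must be equal since these edges share the color $\varphi(a,d)=\varphi(b,c)$, produces $\delta_i(a,d)=+1$ (from $a<d$ with $a^{(i)}<d^{(i)}$) but $\delta_i(b,c)=-1$ (from $b<c$ with $b^{(i)}>c^{(i)}$), the desired contradiction.

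If $j>i$, then all four vertices agree on blocks $1,\dots,i-1$ and the pairs $(a,c),(b,d)$ each agree through block $j-1$. Up to the symmetry $b\leftrightarrow d$, which swaps the two color equations $\varphi(a,b)=\varphi(c,d)$ and $\varphi(a,d)=\varphi(b,c)$ but preserves the full hypothesis, take $b^{(j)}=u$ and $d^{(j)}=v$. The induced order is $a<c<b<d$, and the $j$-th $\varphi_2$-coordinates on $\{a,d\}$ and $\{b,c\}$ are then $+1$ and $-1$ respectively, once again contradicting $\varphi_2(a,d)=\varphi_2(b,c)$. The main obstacle is purely bookkeeping: tracking the block structure, the induced vertex order, and which $\delta$-coordinate to examine in each branch. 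Once the right case and the right coordinate are identified, the sign comparison is a one-line check.
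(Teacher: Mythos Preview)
Your approach is essentially the paper's: use $\varphi_1$ to pin down enough of the block structure and the linear order, then find a $\varphi_2$-coordinate that disagrees on two edges of the same colour.  The case $j<i$ is handled cleanly.

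There is, however, a genuine gap in the case $j>i$.  After the $b\leftrightarrow d$ symmetry you fix $b^{(j)}=u$ and $d^{(j)}=v$, but nothing you have written determines $a^{(j)}$: you only know $\{a^{(j)},c^{(j)}\}=\{u,v\}$.  Your claimed order $a<c<b<d$ presupposes $a^{(j)}=u$, yet $a^{(j)}=v$ (so $c<a<b<d$) is equally possible.  In that sub-case your chosen check fails: $a^{(j)}=d^{(j)}=v$ gives $\delta_j(a,d)=+1$, and $c^{(j)}=b^{(j)}=u$ gives $\delta_j(c,b)=+1$, so the $j$-th $\varphi_2$-coordinate on the pair $\{ad,bc\}$ yields no contradiction.

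The fix is short.  Either invoke one more symmetry --- the swap $a\leftrightarrow c$ preserves all of your prior normalisations (including $a^{(i)}=c^{(i)}=x$, $b^{(j)}=u$, $d^{(j)}=v$) and exchanges the two possibilities for $a^{(j)}$, exactly parallel to your $(a,b)\leftrightarrow(c,d)$ reduction in the $j<i$ case --- or, in the sub-case $a^{(j)}=v$, look instead at the $\alpha$-edges: with $a<b$ and $c<d$ one gets $\delta_j(a,b)=-1$ (since $a^{(j)}=v>u=b^{(j)}$) but $\delta_j(c,d)=+1$, contradicting $\varphi_2(a,b)=\varphi_2(c,d)$.  With either patch the argument is complete and matches the paper's.
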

    
\begin{proof}
Assume towards a contradiction that a striped $K_4$ can occur. Then, $\varphi_1(a,b)=\varphi_1(c,d) = \alpha$, $\varphi_1(a,c)=\varphi_1(b,d) = \gamma$, and $\varphi_1(a,d)=\varphi_1(b,c)=\pi$. Let $\alpha_0 = (i,\{x,y\})$, $\gamma_0=(j,\{s,t\})$, and $\pi_0=(k,\{v,w\})$. Without loss of generality, assume that $i = \min\{i,j,k\}$. Since exactly one of $d^{(i)}$ and $c^{(i)}$ equals $a^{(i)}$, then either $j=i$ or $k=i$. Moreover, the other index must be strictly greater than $i$. So we may assume that $j=i$ and that $i < k$.

Let $a^{(i)}=d^{(i)}=x$, $b^{(i)}=c^{(i)}=y$, $a^{(k)}=b^{(k)}=v$, and $c^{(k)}=d^{(k)}=w$. Without loss of generality we may assume that $x<y$. This implies that $a,d < b,c$. If $v < w$, then $\delta_k(a,c) = +1$ and $\delta_k(d,b) = -1$. Therefore, $\varphi_2(a,c) \neq \varphi_2(b,d)$, a contradiction. So, it must be the case that $w < v$. But then $\delta_k(a,c)=-1$ and $\delta_k(b,d)=+1$, which yields the same contradiction.
\end{proof}

\begin{figure}
     \begin{subfigure}[b]{0.18\textwidth}
          \centering
          \resizebox{\linewidth}{!}{
          \begin{tikzpicture}
		\draw[very thick, black] (1.0, 0.0) -- (6.123233995736766e-17, 1.0);
		\draw[very thick, red] (1.0, 0.0) -- (-1.0, 1.2246467991473532e-16);
		\draw[very thick, red] (1.0, 0.0) -- (-1.8369701987210297e-16, -1.0);
		\filldraw [black] (1.0, 0.0) circle (1pt);
	
		\filldraw [black] (6.123233995736766e-17, 1.0) circle (1pt);
	
		\draw[very thick, black] (-1.0, 1.2246467991473532e-16) -- (-1.8369701987210297e-16, -1.0);
		\filldraw [black] (-1.0, 1.2246467991473532e-16) circle (1pt);
		
		\filldraw [black] (-1.8369701987210297e-16, -1.0) circle (1pt);
		
		\node[right] at (1,0) {$a$};
		\node[above] at (0,1) {$b$};
		\node[left] at (-1,0) {$c$};
		\node[below] at (0,-1) {$d$};
		
	\end{tikzpicture}
					}
          \caption{}
          \label{fig:A}
     \end{subfigure}
     \begin{subfigure}[b]{0.18\textwidth}
     \centering
     \resizebox{\linewidth}{!}{
     \begin{tikzpicture}
\draw[very thick, black] (1.0, 0.0) -- (6.123233995736766e-17, 1.0);
\draw[very thick, red] (1.0, 0.0) -- (-1.0, 1.2246467991473532e-16);
\draw[very thick, green] (1.0, 0.0) -- (-1.8369701987210297e-16, -1.0);
\filldraw [black] (1.0, 0.0) circle (1pt);
\draw[very thick, green] (6.123233995736766e-17, 1.0) -- (-1.0, 1.2246467991473532e-16);
\draw[very thick, red] (6.123233995736766e-17, 1.0) -- (-1.8369701987210297e-16, -1.0);
\filldraw [black] (6.123233995736766e-17, 1.0) circle (1pt);
\draw[very thick, black] (-1.0, 1.2246467991473532e-16) -- (-1.8369701987210297e-16, -1.0);
\filldraw [black] (-1.0, 1.2246467991473532e-16) circle (1pt);
\filldraw [black] (-1.8369701987210297e-16, -1.0) circle (1pt);

		\node[right] at (1,0) {$a$};
		\node[above] at (0,1) {$b$};
		\node[left] at (-1,0) {$c$};
		\node[below] at (0,-1) {$d$};
\end{tikzpicture}
}
\caption{}
\label{fig:E}
     \end{subfigure}
     \begin{subfigure}[b]{0.2\textwidth}
          \centering
          \resizebox{\linewidth}{!}{
 \begin{tikzpicture}
\draw[very thick, black] (1.0, 0.0) -- (0.30901699437494745, 0.95105651629515353);
\draw[very thick, black] (0.30901699437494745, 0.95105651629515353) -- (-0.80901699437494734, 0.58778525229247325);

\draw[very thick, red] (-0.80901699437494734, 0.58778525229247325) -- (-0.80901699437494745, -0.58778525229247303);
\draw[very thick, black] (-0.80901699437494745, -0.58778525229247303) --  (0.30901699437494723, -0.95105651629515364);

\draw[very thick, red] (1.0, 0.0) --  (0.30901699437494723, -0.95105651629515364);

\filldraw [black] (1.0, 0.0) circle (1pt);
\node[right] at (1,0) {$a$};
\filldraw [black] (0.30901699437494745, 0.95105651629515353) circle (1pt);
\node[above] at (0.30901699437494745, 0.95105651629515353) {$b$};
\filldraw [black] (-0.80901699437494734, 0.58778525229247325) circle (1pt);
\node[left] at (-0.80901699437494734, 0.58778525229247325) {$c$};
\filldraw [black] (-0.80901699437494745, -0.58778525229247303) circle (1pt);
\node[left] at (-0.80901699437494745, -0.58778525229247303) {$d$};
\filldraw [black] (0.30901699437494723, -0.95105651629515364) circle (1pt);
\node[below] at (0.30901699437494723, -0.95105651629515364) {$e$};
\end{tikzpicture}
					}
          \caption{}
          \label{fig:AA}
     \end{subfigure}
     \begin{subfigure}[b]{0.2\textwidth}
          \centering
          \resizebox{\linewidth}{!}{
  \begin{tikzpicture}
\draw[very thick, black] (1.0, 0.0) -- (0.30901699437494745, 0.95105651629515353);
\draw[very thick, red] (0.30901699437494745, 0.95105651629515353) -- (-0.80901699437494734, 0.58778525229247325);

\draw[very thick, black] (-0.80901699437494734, 0.58778525229247325) -- (-0.80901699437494745, -0.58778525229247303);
\draw[very thick, blue] (-0.80901699437494745, -0.58778525229247303) --  (0.30901699437494723, -0.95105651629515364);

\draw[very thick, red] (1.0, 0.0) --  (0.30901699437494723, -0.95105651629515364);

\draw[very thick, blue] (1.0, 0.0) --  (-0.80901699437494734, 0.58778525229247325);

\filldraw [black] (1.0, 0.0) circle (1pt);
\node[right] at (1,0) {$a$};
\filldraw [black] (0.30901699437494745, 0.95105651629515353) circle (1pt);
\node[above] at (0.30901699437494745, 0.95105651629515353) {$b$};
\filldraw [black] (-0.80901699437494734, 0.58778525229247325) circle (1pt);
\node[left] at (-0.80901699437494734, 0.58778525229247325) {$c$};
\filldraw [black] (-0.80901699437494745, -0.58778525229247303) circle (1pt);
\node[left] at (-0.80901699437494745, -0.58778525229247303) {$d$};
\filldraw [black] (0.30901699437494723, -0.95105651629515364) circle (1pt);
\node[below] at (0.30901699437494723, -0.95105651629515364) {$e$};
\end{tikzpicture}
          }
          \caption{}
          \label{fig:BB}
     \end{subfigure}
     \begin{subfigure}[b]{0.2\textwidth}
          \centering
          \resizebox{\linewidth}{!}{
\begin{tikzpicture}
\draw[very thick, red] (1.0, 0.0) -- (0.30901699437494723, -0.95105651629515364);
\draw[very thick, black] (1.0, 0.0) -- (0.30901699437494745, 0.95105651629515353);
\draw[very thick, red] (0.30901699437494745, 0.95105651629515353) -- (-0.80901699437494745, -0.58778525229247303);
\draw[very thick, black] (-0.80901699437494745, -0.58778525229247303) -- (0.30901699437494723, -0.95105651629515364);

\draw[very thick, blue] (0.30901699437494745, 0.95105651629515353) -- (0.30901699437494723, -0.95105651629515364);
\draw[very thick, blue] (0.30901699437494745, 0.95105651629515353) -- (-0.80901699437494734, 0.58778525229247325);
\draw[very thick, green] (0.30901699437494723, -0.95105651629515364) -- (-0.80901699437494734, 0.58778525229247325);
\draw[very thick, green] (-0.80901699437494745, -0.58778525229247303) -- (1,0);

\filldraw [black] (1.0, 0.0) circle (1pt);
\node[right] at (1,0) {$b$};
\filldraw [black] (0.30901699437494745, 0.95105651629515353) circle (1pt);
\node[above] at (0.30901699437494723, 0.95105651629515364) {$a$};
\filldraw [black] (-0.80901699437494734, 0.58778525229247325) circle (1pt);
\node[left] at (-0.80901699437494734, 0.58778525229247325) {$e$};
\filldraw [black] (-0.80901699437494745, -0.58778525229247303) circle (1pt);
\node[left] at (-0.80901699437494734, -0.58778525229247325) {$d$};
\filldraw [black] (0.30901699437494723, -0.95105651629515364) circle (1pt);
\node[below] at (0.30901699437494723, -0.95105651629515364) {$c$};
\end{tikzpicture}
}
\caption{}
\label{fig:altC4strag}
\end{subfigure}
     \caption{Five additional configurations avoided by $\varphi$.}
     \label{additional}
 \end{figure}
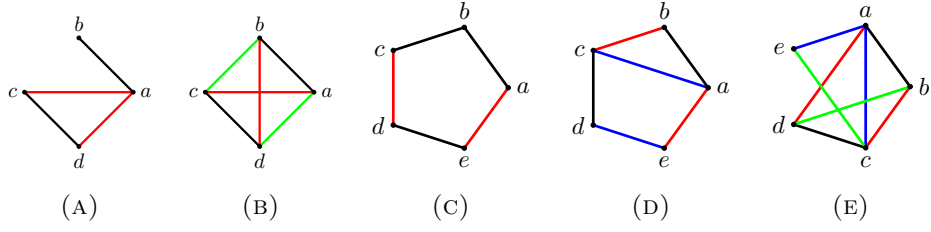

\begin{lemma}
Let $a,b,c,d,e$ be distinct vertices. The CFLS coloring forbids $\varphi(ab)=\varphi(bc)=\varphi(de)=\alpha$ and $\varphi(cd)=\varphi(ae)=\beta$ (see Figure~\ref{fig:AA}).
\end{lemma}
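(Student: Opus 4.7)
The plan is to argue using only the $\varphi_1$-component by tracking the $i$-th block, where $i$ is the first index of $\alpha_0$. Write $\alpha_0 = (i, \{x,y\})$ and, without loss of generality, assume $a^{(i)} = x$. Then $\varphi_1(ab) = \alpha$ forces $b^{(i)} = y$, and $\varphi_1(bc) = \alpha$ forces $c^{(i)} = x$. The remaining $\alpha$-edge $de$ requires $\{d^{(i)}, e^{(i)}\} = \{x, y\}$, which leaves exactly two subcases for how $d$ and $e$ are assigned on block $i$.

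Next I would examine the two $\beta$-edges $cd$ and $ae$, asking what each contributes to the $i$-th coordinate $\beta_i$ (the index of the first bit difference between the $i$-th blocks of the endpoints, with $\beta_i = 0$ meaning the blocks agree). In the first subcase $d^{(i)} = x$, $e^{(i)} = y$: the edge $cd$ joins two vertices with identical $i$-th block $x$, so $\beta_i = 0$, while the edge $ae$ joins an $x$-block vertex with a $y$-block vertex, forcing $\beta_i$ to be the first bit where $x$ and $y$ differ, which is nonzero. This is a contradiction. The second subcase $d^{(i)} = y$, $e^{(i)} = x$ is symmetric: now $cd$ forces $\beta_i \neq 0$ and $ae$ forces $\beta_i = 0$, the same contradiction.

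Since both subcases are impossible, such a configuration cannot arise in $\varphi_1$ and hence not in $\varphi$. There is no real obstacle here; the argument is a clean one-index book-keeping argument of the same flavor as Lemma~\ref{forbidden} and the $P_3$ discussion preceding Figure~\ref{TheP3s}. Note that Lemma~\ref{colorcycle} does not apply directly because we are not given the colors of $\varphi(ac)$, $\varphi(bd)$, or $\varphi(be)$, so we cannot close an $\alpha$-monochromatic path of length $\ge 2$ into an auxiliary digraph cycle; the direct block-coordinate argument is what does the job.
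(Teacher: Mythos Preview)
Your proposal is correct and follows essentially the same approach as the paper: track the $i$-th block determined by $\alpha_0=(i,\{x,y\})$, propagate the forced values of $a^{(i)},b^{(i)},c^{(i)},d^{(i)},e^{(i)}$, and obtain a contradiction in $\beta_i$ from the edges $cd$ and $ae$. The only cosmetic difference is that the paper invokes the symmetry $(a\leftrightarrow c,\ d\leftrightarrow e)$ of the configuration to reduce to a single subcase, whereas you handle both subcases explicitly.
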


\begin{proof}
Assume that this can happen and that $\alpha_0=(i,\{x,y\})$. Without loss of generality, assume that $b^{(i)}=x$ and $a^{(i)}=c^{(i)}=y$. Moreover, without loss we can assume that $d^{(i)}=x$ and $e^{(i)}=y$. Then $a^{(i)}=e^{(i)}$ implies that $\beta_i=0$. But $c^{(i)} \neq d^{(i)}$ implies that $\beta_i=0$, a contradiction.
\end{proof}

\begin{lemma}
Let $a,b,c,d,e$ be distinct vertices. The CFLS coloring forbids $\varphi(ab)=\varphi(cd)=\alpha$, $\varphi(ae)=\varphi(bc)=\beta$, and $\varphi(ac)=\varphi(de)=\gamma$ (see Figure~\ref{fig:BB}).
\end{lemma}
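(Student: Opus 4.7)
The plan is to argue by contradiction using the first-block-of-difference indices. Setting $\alpha_0 = (i, \{x, y\})$, $\beta_0 = (j, \{p, q\})$, $\gamma_0 = (k, \{r, s\})$ and assuming without loss of generality $a^{(i)} = x$ and $b^{(i)} = y$, the color-$\alpha$ equality forces $\{c^{(i)}, d^{(i)}\} = \{x, y\}$, so either $c^{(i)} = y$, $d^{(i)} = x$ (call this Case A) or $c^{(i)} = x$, $d^{(i)} = y$ (Case B). The overall goal is to rule out Case A, pin Case B to the subcase $i = j < k$, and then show $\alpha = \beta$ as colors, contradicting the distinctness of the two color names in the hypothesis.

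For Case A, $a \neq c$ already at block $i$ forces $k \leq i$. A short sub-case analysis on $k = i$ versus $k < i$ (in each case tracking the forced value of $e$ via the $\gamma$-pair at the relevant block, then using $\beta$-consistency to place $j$) produces a block at which the $\alpha$- and $\gamma$-equalities collapse all five vertices to a single value, contradicting $b \neq c$ from $\beta$. In Case B, $a = c$ and $b = d$ at block $i$, so $b \neq c$ there forces $j \leq i$. To rule out $j < i$ I would examine block $j$: the $\alpha$-equalities give $a = b$ and $c = d$, so the $\beta$- and $\gamma$-consistencies reduce to equal first-bit-differences among the pairs $\{a, c\}$, $\{c, e\}$, $\{a, e\}$, which would require $a^{(j)}, c^{(j)}, e^{(j)}$ to differ pairwise at a single bit — impossible in $\{0, 1\}$. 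Hence $j = i$, pinning $e^{(i)} = y$ and $\beta_0 = \alpha_0$; an analogous argument at block $k$ using $\beta$-forced equalities rules out $k \leq i$, leaving $i = j < k$.

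The main step is then to show $\alpha_l = \beta_l$ for every block $l$, which combined with $\alpha_0 = \beta_0$ gives $\alpha = \beta$ and the desired contradiction. For $l \leq i$ this is immediate. For a general $l$, either $a^{(l)}, \dots, e^{(l)}$ are all equal (so $\alpha_l = \beta_l = 0$), or I let $m$ be the smallest bit of block $l$ at which any of them disagree and set $a^{(l)}_m = 0$ without loss of generality. The color-consistencies of $\alpha$, $\beta$, $\gamma$ at bit $m$ — that the pairs $(ab, cd)$, $(bc, ae)$, $(ac, de)$ each differ at $m$ in tandem — translate into the $\mathbb{F}_2$ system
\[
b_m = c_m \oplus d_m, \qquad e_m = b_m \oplus c_m, \qquad c_m = d_m \oplus e_m,
\]
whose only solutions are $(b_m, c_m, d_m, e_m) = (0, 0, 0, 0)$ (excluded by the choice of $m$) and $(1, 0, 1, 1)$. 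In the latter assignment, both $a, b$ and $b, c$ first differ at bit $m$, so $\alpha_l = \beta_l = m$.

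The main obstacle is recognizing that the three color-consistency conditions combine into this small $\mathbb{F}_2$ system whose unique nontrivial solution pins $\alpha_l$ and $\beta_l$ to the same bit; once identified the substitution is short, and the preliminary case elimination follows the same first-block-comparison style used in the earlier lemmas of this section.
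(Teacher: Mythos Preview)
Your argument works, with one caveat: the final contradiction (``$\alpha=\beta$, contradicting distinctness'') assumes the three color names are distinct, which the figure suggests but the lemma does not state explicitly; the paper's own proof does not use this assumption. Modulo that, the case analysis and the $\mathbb{F}_2$ system are sound. (Your Case~A description is a bit loose --- in the $k<i$ subcase the contradiction comes directly from $\beta_k$ being simultaneously zero and nonzero, not from all five vertices collapsing --- but both subcases do close.)

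The paper's proof, however, is about five lines, and the entire difference lies in the choice of anchor. Instead of starting from $\alpha_0$, the paper sets $\gamma_0=(i,\{x,y\})$. Since $\gamma$ colors both $ac$ and $de$, this pins $a^{(i)},c^{(i)},d^{(i)},e^{(i)}\in\{x,y\}$ at once. One of the two possible assignments for $(d^{(i)},e^{(i)})$ forces $\alpha_i=\beta_i=0$, giving $b^{(i)}=x$ from $ab$ and $b^{(i)}=y$ from $bc$; the other forces $b^{(i)}=z\notin\{x,y\}$, and then the $\alpha$- and $\beta$-equalities at coordinate $i$ make $x,y,z$ three distinct binary strings that pairwise first differ at the \emph{same} bit --- impossible. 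You already deploy exactly this three-strings-one-bit trick to kill the $j<i$ subcase; the paper makes it the whole proof by anchoring at $\gamma$, which eliminates your Case~A/Case~B split, the relative placement of $i,j,k$, and the block-by-block $\mathbb{F}_2$ computation altogether. Your $\mathbb{F}_2$ system is a nice observation, but here it is doing far more work than the problem requires.
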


\begin{proof}
Let $\gamma_0=(i,\{x,y\})$. Without loss of generality we may assume that $a^{(i)}=x$ and $c^{(i)}=y$. If $d^{(i)}=y$ and $e^{(i)}=x$, then $\alpha_i=\beta_i=0$ and so $b^{(i)}=x$ and $b^{(i)}=y$, a contradiction. Hence, $d^{(i)}=x$ and $e^{(i)}=y$. So $b^{(i)} = z \not \in \{x,y\}$. Moreover, $\alpha_i=\beta_i$ both go between $x$ and $y$. Therefore, the three distinct binary strings $x,y,z$ are all pairwise different at the same first index, $\alpha_i$, a contradiction.
\end{proof}

\begin{lemma}
It cannot be the case that $a,b,c,d,e$ are distinct vertices such that $\varphi(ab)=\varphi(cd) = \alpha$, $\varphi(bc) = \varphi(ad) = \beta$, $\varphi(ae)=\varphi(ac) = \gamma$, and $\varphi(bd)=\varphi(bc)=\pi$ (see Figure~\ref{fig:altC4strag}).
\end{lemma}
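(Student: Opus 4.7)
The plan is to mimic the block-coordinate arguments of the preceding lemmas, but to derive the contradiction at the block indexed by $\pi_0$ rather than the one indexed by $\gamma_0$. Attacking block $i$ of $\gamma_0$ directly turns out to be insufficient on $\varphi_1$ alone: one can check that the assignment $b^{(i)} = d^{(i)} = z \notin \{x,y\}$ with $\alpha_i = \beta_i$ equal to the first bit at which $z$ differs from both $x$ and $y$ is $\varphi_1$-consistent. The key observation is that the two $\pi$-edges $bd$ and $ce$ expose a fresh block $j > i$ at which two of the remaining colors immediately collide.

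To execute this, I first set $\gamma_0 = (i, \{x,y\})$. Since $\gamma$ is incident to $a$ on both $ac$ and $ae$, I may assume without loss of generality that $a^{(i)} = x$ and $c^{(i)} = e^{(i)} = y$. Because $a$, $c$, $e$ pairwise agree on blocks $1,\ldots,i-1$, the vertices $c$ and $e$ agree on blocks $1,\ldots,i$; this forces $\pi_0 = (j,\{p,q\})$ for some $j > i$ and
\[
\{b^{(j)}, d^{(j)}\} = \{c^{(j)}, e^{(j)}\} = \{p,q\}.
\]
I then split on the two possible matchings of this pair at block $j$. In the matching with $b^{(j)} = c^{(j)}$ and $d^{(j)} = e^{(j)}$, edge $bc$ gives $\beta_j = 0$, so edge $ad$ forces $a^{(j)} = d^{(j)} = e^{(j)}$; edge $ae$ then reads $\gamma_j = 0$, while edge $ac$ reads $\gamma_j \neq 0$ because $a^{(j)} = d^{(j)} \neq b^{(j)} = c^{(j)}$. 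In the opposite matching $b^{(j)} = e^{(j)}$, $d^{(j)} = c^{(j)}$, edge $cd$ yields $\alpha_j = 0$, so $a^{(j)} = b^{(j)} = e^{(j)}$, and the same clash between edges $ae$ and $ac$ reappears.

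The main obstacle I expect is the initial choice of index: the earlier lemmas in this section all work at the block of a single color's first coordinate, so some care is needed to see that the productive block here is $j$ (from $\pi$) rather than $i$ (from $\gamma$). The exploitable asymmetry is that $\gamma$ forms a $P_3$ at $a$ while $\pi$ is a matching disjoint from $a$, so at block $j$ no edge through $a$ carries color $\pi$ and the colors $\alpha$, $\beta$, $\gamma$ are left to conflict directly. Once this is in place, each of the two matchings closes in a single line of $\varphi_1$ bookkeeping, so no appeal to $\varphi_2$ or to the algebraic coloring $\chi$ is required.
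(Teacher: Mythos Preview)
Your proof is correct, and in fact it ultimately works at the same block as the paper's proof --- the block indexed by $\pi_0$. The difference is in how you get there and how you finish.

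The paper simply sets $\pi_0 = (i,\{x,y\})$ from the start. From the $\gamma$-path $e\,a\,c$ it reads off $a^{(i)}\notin\{x,y\}$ in one step (exactly your observation that $\gamma_j$ cannot be simultaneously zero and nonzero, run in reverse), which together with $d^{(i)}\in\{x,y\}$ forces $\beta_i\neq 0$; this pins down $b^{(i)}=x$, $d^{(i)}=y$ without a case split, and a single comparison of $\alpha_i$ on $cd$ versus $ab$ finishes. Your argument instead opens at $\gamma_0$, proves $j>i$, and then does a two-case matching analysis at block $j$, closing each case via $\gamma_j$. Both routes use only $\varphi_1$.

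Two remarks. First, the inequality $j>i$ that you take pains to establish is never used in your case analysis; everything you do at block $j$ relies only on $\{b^{(j)},d^{(j)}\}=\{c^{(j)},e^{(j)}\}=\{p,q\}$, which follows from $\pi_0=(j,\{p,q\})$ alone. You could have started at $\pi_0$ directly and skipped the first paragraph. Second, your diagnosis that the $\gamma_0$-block is unproductive for $\varphi_1$ is accurate, but the paper never attempts that attack, so it is not really an obstacle to be overcome --- just a dead end you correctly avoided.
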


\begin{proof}
Let $\pi_0 = (i,\{x,y\})$. Without loss of generality we may assume that $e^{(i)}=x$ and $c^{(i)}=y$. It follows from color $\gamma$ that $a^{(i)} \not \in \{x,y\}$. Since $d^{(i)} \in \{x,y\}$, then it follows that $\beta_i \neq 0$. Hence, $b^{(i)} = x$ and so $d^{(i)}=y$. So $\varphi(cd)=\alpha$ implies that $\alpha_i=0$, but $\varphi(ab) = \alpha$ implies that $\alpha_i \neq 0$, a contradiction.
\end{proof}

Finally, another brief but important fact about the CFLS when considered along with a linear order on the binary strings that we used in \cite{55} and will use here in combination with the algebraic coloring is given in the following lemma.
 
 \begin{lemma}
\label{order}
If $a<b<c$, then $\varphi(a,b) \neq \varphi(b,c)$.
\end{lemma}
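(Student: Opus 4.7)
The plan is to argue by contradiction using only the zero coordinate of $\varphi_1$, together with the key monotonicity observation noted earlier: if $u<v$ and $i$ is the first block on which they differ, then $u^{(i)}<v^{(i)}$.

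Suppose, toward a contradiction, that $\varphi(a,b)=\varphi(b,c)$. In particular the zero coordinates of $\varphi_1(a,b)$ and $\varphi_1(b,c)$ must coincide. Write $\varphi_1(a,b)_0=(i,\{x,y\})$, so that $i$ is the first $\beta$-block on which $a$ and $b$ differ and $\{a^{(i)},b^{(i)}\}=\{x,y\}$. Matching zero coordinates forces $i$ to also be the first block on which $b$ and $c$ differ, and $\{b^{(i)},c^{(i)}\}=\{x,y\}$.

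Next I would use the hypothesis $a<b<c$. The observation about the ordering gives $a^{(i)}<b^{(i)}$ (since $i$ is the first block of disagreement between $a$ and $b$) and $b^{(i)}<c^{(i)}$ (since $i$ is also the first block of disagreement between $b$ and $c$). Hence $a^{(i)}$, $b^{(i)}$, $c^{(i)}$ are three distinct elements of $\{0,1\}^{\beta}$. This is incompatible with the equality $\{a^{(i)},b^{(i)}\}=\{b^{(i)},c^{(i)}\}$, which, being an equality of two-element sets sharing the element $b^{(i)}$, would force $a^{(i)}=c^{(i)}$.

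There is no real obstacle here; the one thing to be careful about is making sure that when we deduce $b^{(i)}<c^{(i)}$ we are applying the ordering observation to the correct block index, which is fine because the matching of the zero coordinates already forces the first block of disagreement between $b$ and $c$ to be the same index $i$. Note that the coloring $\varphi_2$ is not needed for this lemma; $\varphi_1$ combined with the order on $V$ is already enough.
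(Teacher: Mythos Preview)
Your proof is correct and follows essentially the same approach as the paper: use the zero coordinate of $\varphi_1$ to pin down the common first block $i$ of disagreement, and then invoke the ordering observation to force a contradiction. The paper phrases the final contradiction as $c<b$ (since $c^{(i)}=x<y=b^{(i)}$ at the first block of disagreement), whereas you phrase it as three distinct values appearing in a two-element set; these are the same idea, and your remark that $\varphi_2$ plays no role is also in line with the paper's proof.
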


\begin{proof}
Suppose $\varphi_1(a,b)=\varphi_1(b,c) = \alpha$ and that $\alpha_0 = (i, \{x,y\})$ for $x < y$. Then $a^{(i)} = x$ and $b^{(i)}=y$. But then $c^{(i)}=x$. Therefore, $c < b$, a contradiction.
\end{proof}

 \section{Configurations avoided by the algebraic coloring}
 
 We begin with two basic lemmas about the algebraic construction $\chi$.
 
 \begin{lemma}
 \label{basic1}
 Let $a,b,c$ be three distinct vertices such that $\chi(ab)=\chi(ac)$, then $b_1 \neq c_1$.
 \end{lemma}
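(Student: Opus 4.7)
The plan is to argue by contradiction, using only the first coordinate of $\chi_1$ (the Mubayi-type coloring), since the $\chi_2$ component and the $\delta$ flag are irrelevant to this particular conclusion.

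Suppose toward a contradiction that $b_1 = c_1$. Since $b \neq c$ as vertices, we must then have $b_2 \neq c_2$. From $\chi(ab) = \chi(ac)$ we extract the equality of the first coordinates of $\chi_1$, namely
\[
a_1 b_1 - a_2 - b_2 \;=\; a_1 c_1 - a_2 - c_2
\]
in $\mathbb{F}_q$. With the assumption $b_1 = c_1$ the $a_1 b_1$ and $a_1 c_1$ terms cancel, as do the $-a_2$ terms, leaving $-b_2 = -c_2$, i.e.\ $b_2 = c_2$. This contradicts $b_2 \neq c_2$, so we must have $b_1 \neq c_1$.

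There is no real obstacle here: the statement is an immediate algebraic consequence of the definition of $\chi_1$, and neither the auxiliary indicator $\delta$ nor the modification $\chi_2$ (with its $f_{x_1}, f_{y_1}$ functions) enter the argument. The lemma is essentially recording that, for fixed $a$, the map $x \mapsto a_1 x_1 - a_2 - x_2$ is injective on any two vertices sharing the same first coordinate, which is the basic property of Mubayi's coloring that will be used repeatedly in the arguments that follow.
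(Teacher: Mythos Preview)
Your proof is correct and essentially identical to the paper's: both assume $b_1=c_1$, use the first coordinate of $\chi_1$ to deduce $b_2=c_2$, and obtain the contradiction $b=c$. The paper phrases it as first simplifying to $a_1(b_1-c_1)=b_2-c_2$ and then specializing, but the content is the same.
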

 
 \begin{proof}
 Let  $\chi(ab)=\chi(ac)$. Then
 \begin{align*}
 a_1b_1 - a_2 - b_2 &= a_1c_1 - a_2 - c_2\\
 a_1(b_1-c_1) &= b_2-c_2.
 \end{align*}
 If $b_1=c_1$, then $b_2=c_2$ as well. Hence, $b=c$, a contradiction.
 \end{proof}
 
 \begin{lemma}
 \label{basic2}
  Let $a,b,c,d$ be four distinct vertices such that $\chi(ab)=\chi(ac)$ and $\chi(db)=\chi(dc)$, then $a_1 = d_1$.
 \end{lemma}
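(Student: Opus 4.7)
The plan is to extract the key algebraic identity from equality in the first coordinate of $\chi_1$, apply it at both $a$ and $d$, and then invoke Lemma~\ref{basic1} to cancel a nonzero factor. Specifically, I would first observe that since $\chi(ab) = \chi(ac)$ forces $\chi_1(ab) = \chi_1(ac)$, the same manipulation used in the proof of Lemma~\ref{basic1} yields
\[
a_1(b_1 - c_1) = b_2 - c_2.
\]
Repeating the identical computation with $d$ in place of $a$, using $\chi(db) = \chi(dc)$, gives
\[
d_1(b_1 - c_1) = b_2 - c_2.
\]

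Subtracting these two equations in the field $\mathbb{F}_q$ yields $(a_1 - d_1)(b_1 - c_1) = 0$. At this point, I would apply Lemma~\ref{basic1} to the triple $a, b, c$: since $b \neq c$ and $\chi(ab) = \chi(ac)$, that lemma guarantees $b_1 \neq c_1$, so the factor $b_1 - c_1$ is a unit in $\mathbb{F}_q$. Cancelling it forces $a_1 = d_1$, as required.

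There is no real obstacle here: everything reduces to the bilinear structure of the first coordinate of $\chi_1$, and the only nontriviality is knowing that $b_1 - c_1$ is invertible, which is exactly the content of the immediately preceding lemma. The second coordinate of $\chi_1$ and the entire $\chi_2$ component of the coloring play no role in this particular statement.
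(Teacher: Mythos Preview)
Your proof is correct and follows essentially the same approach as the paper: both derive $a_1(b_1-c_1)=b_2-c_2$ and $d_1(b_1-c_1)=b_2-c_2$, then use Lemma~\ref{basic1} to ensure $b_1-c_1\neq 0$ before concluding $a_1=d_1$. The only cosmetic difference is that the paper divides each equation by $b_1-c_1$ and writes $a_1=\tfrac{b_2-c_2}{b_1-c_1}=d_1$ directly, whereas you subtract first and then cancel.
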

 
 \begin{proof}
 Since $b_1 \neq c_1$ by Lemma~\ref{basic1}, then we know that \[a_1 = \frac{b_2-c_2}{b_1-c_1} = d_1.\]
 \end{proof}
 
 As shown in \cite{mubayi2004}, the algebraic construction $\chi_1$ avoids monochromatic $C_4$s (see Figure~\ref{fig:DM1}) as well as the configuration shown in Figure~\ref{fig:DM2}. We provide these two results for completeness.
 
 \begin{lemma}
 Let $a,b,c,d$ be distinct vertices. The algebraic coloring $\chi$ forbids $\chi(ab)=\chi(bc)=\chi(cd)=\chi(da)$ (see Figure~\ref{fig:DM1}).
 \end{lemma}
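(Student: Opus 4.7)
The plan is to derive a contradiction directly from the two basic algebraic lemmas, using only the $\chi_1$ portion of the coloring (the monochromatic $C_4$ is already forbidden by Mubayi's first coordinate).

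First I would assume, toward contradiction, that there exist four distinct vertices $a,b,c,d$ with $\chi(ab)=\chi(bc)=\chi(cd)=\chi(da)$. The cycle has two natural ``opposite'' pairs: the pair $\{b,d\}$ sees the common color from both $a$ and $c$, and the pair $\{a,c\}$ sees the common color from both $b$ and $d$. That is, $\chi(ab)=\chi(ad)$ and $\chi(cb)=\chi(cd)$; also $\chi(ba)=\chi(bc)$ and $\chi(da)=\chi(dc)$. Applying Lemma~\ref{basic2} to the first pair of equalities gives $a_1 = c_1$, and applying it to the second pair gives $b_1 = d_1$.

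Next I would look at the first coordinate of $\chi_1$ along two consecutive edges of the cycle. From $\chi_1(ab)=\chi_1(bc)$ we get
\[
a_1 b_1 - a_2 - b_2 \;=\; b_1 c_1 - b_2 - c_2,
\]
which simplifies to $b_1(a_1-c_1) = a_2 - c_2$. Since $a_1 = c_1$ from the previous step, this forces $a_2 = c_2$, and combined with $a_1 = c_1$ we conclude $a=c$, contradicting distinctness.

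I do not expect any serious obstacle here: the essential content is packaged in Lemmas~\ref{basic1} and~\ref{basic2}, and the argument above is just the observation that a monochromatic $C_4$ supplies the right pairs of equal-color edges at each of two ``apex'' vertices simultaneously. No appeal to the $\delta$ coordinate of $\chi_1$ or to the modification $\chi_2$ is required, which is consistent with the fact (stated in the excerpt) that this configuration is already ruled out by Mubayi's original coloring.
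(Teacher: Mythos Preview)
Your argument is correct and follows the same idea as the paper. The paper's proof is one step shorter: it applies Lemma~\ref{basic1} to $\chi(ab)=\chi(ad)$ to get $b_1\neq d_1$, and Lemma~\ref{basic2} (to the pair $b,d$ each seeing $a,c$) to get $b_1=d_1$, an immediate contradiction; your final computation from $\chi_1(ab)=\chi_1(bc)$ is just a re-derivation of Lemma~\ref{basic1}, so you never actually needed the second application of Lemma~\ref{basic2}.
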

 
 \begin{proof}
 By Lemma~\ref{basic1}, $b_1 \neq d_1$. But $b_1=d_1$ by Lemma~\ref{basic2}, a contradiction.
 \end{proof}
 
 \begin{lemma}
  Let $a,b,c,d$ be distinct vertices. The algebraic coloring $\chi$ forbids $\chi(ab)=\chi(ac)=\chi(ad)$ and $\chi(bc)=\chi(bd)$ (see Figure~\ref{fig:DM2}).
 \end{lemma}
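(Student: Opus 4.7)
The plan is to leverage Lemmas~\ref{basic1} and~\ref{basic2} together with the second coordinate $\delta$ of the color $\chi_1$ to derive a contradiction about the first coordinates of the vertices.

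First, I would unpack what the hypotheses give through Lemma~\ref{basic1}. From $\chi(ab)=\chi(ac)$ we get $b_1\neq c_1$; from $\chi(ab)=\chi(ad)$ we get $b_1\neq d_1$; and from $\chi(ac)=\chi(ad)$ we get $c_1\neq d_1$. So the three numbers $b_1,c_1,d_1$ are pairwise distinct. In particular, the second coordinate $\delta(a_1,b_1)$ of $\chi_1(ab)$ and the second coordinate $\delta(a_1,c_1)$ of $\chi_1(ac)$ must both equal the same value, and similarly for $\chi_1(ad)$.

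Next, I would apply Lemma~\ref{basic2} to the pair of equalities $\chi(ac)=\chi(ad)$ and $\chi(bc)=\chi(bd)$: this forces $a_1=b_1$. This is the crucial step, and I expect nothing to go wrong here since it is a direct invocation of the lemma with the roles of the four vertices matched up appropriately.

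Finally, I would push this collision of first coordinates against the second coordinate $\delta$ of $\chi_1$. Since $a_1=b_1$, we have $\delta(a_1,b_1)=0$. On the other hand, $c_1\neq b_1=a_1$, so $\delta(a_1,c_1)=1$. These cannot both be the second coordinate of a common color, contradicting $\chi(ab)=\chi(ac)$. The only mild obstacle is keeping the bookkeeping of which pairs of equalities feed into which lemma straight; once $a_1=b_1$ is obtained from Lemma~\ref{basic2}, the $\delta$-coordinate of $\chi_1$ immediately closes the argument.
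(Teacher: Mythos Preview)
Your proof is correct and follows essentially the same approach as the paper: apply Lemma~\ref{basic2} to obtain $a_1=b_1$, then use the $\delta$-coordinate of $\chi_1$ to reach a contradiction. The only cosmetic difference is that the paper pushes one step further, deducing $c_1=a_1=d_1$ from $\delta(a_1,b_1)=0$ and then contradicting $c_1\neq d_1$ via Lemma~\ref{basic1}, whereas you stop as soon as $\delta(a_1,b_1)=0\neq 1=\delta(a_1,c_1)$ contradicts $\chi(ab)=\chi(ac)$; your ending is in fact slightly more direct.
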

 
 \begin{proof}
 By Lemma~\ref{basic2}, $a_1=b_1$. Therefore, $\delta(a_1,b_1)=0$. So $c_1=a_1=d_1$. But $c_1 \neq d_1$ by Lemma~\ref{basic1}.
 \end{proof}
 
 Now we will take care of a few additional configurations. We will use the following technical lemma.
 
 \begin{lemma}
 \label{altC4lemma}
 Let $a,b,c,d$ be four distinct vertices such that $\chi(ab)=\chi(cd)$ and $\chi(bc)=\chi(ad)$. Then \[(a_1+c_1)(b_1-d_1) = 2(b_2-d_2).\]
 \end{lemma}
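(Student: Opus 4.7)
The plan is to exploit the first coordinate of $\chi_1$, since the assumption on $\chi$ immediately implies equality of the $\chi_1$-colors (which live in $\mathbb{F}_q \times \{0,1\}$). Projecting onto the first coordinate of $\chi_1$ and using the definition $\chi_1(xy)_0 = x_1 y_1 - x_2 - y_2$, the hypotheses become the pair of field equations
\begin{align*}
a_1 b_1 - a_2 - b_2 &= c_1 d_1 - c_2 - d_2, \\
b_1 c_1 - b_2 - c_2 &= a_1 d_1 - a_2 - d_2.
\end{align*}
The identity to prove is symmetric in swapping the pair $\{a,c\}$ against the pair $\{b,d\}$ in a certain asymmetric way, which suggests combining the equations in a way that cancels $a_2$ and $c_2$ while keeping $b_2$ and $d_2$.

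The key step I would take is to \emph{add} the two equations (rather than subtract). On the left one gets $a_1 b_1 + b_1 c_1 - a_2 - 2 b_2 - c_2 = b_1(a_1+c_1) - (a_2 + c_2) - 2 b_2$, and on the right one gets $c_1 d_1 + a_1 d_1 - a_2 - c_2 - 2 d_2 = d_1(a_1+c_1) - (a_2 + c_2) - 2 d_2$. The $a_2 + c_2$ terms cancel, and rearranging yields exactly
\[
(a_1+c_1)(b_1-d_1) = 2(b_2-d_2),
\]
as required.

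There is no real obstacle here: the statement is a pure algebraic identity over $\mathbb{F}_q$, and the only non-trivial choice is the combination to take. Neither $\chi_2$ nor the $\delta$-indicator is needed — just the first coordinate of $\chi_1$. One small point worth noting in the write-up is that $q$ is an odd prime power, so dividing by $2$ (if one wanted to write $(a_1+c_1)(b_1-d_1)/2 = b_2-d_2$) would be legitimate; but the statement as given is a pure polynomial identity, so even this remark is unnecessary.
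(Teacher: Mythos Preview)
Your proof is correct and essentially identical to the paper's: the paper writes the second equation with its sides swapped (as $a_1 d_1 - a_2 - d_2 = c_1 b_1 - c_2 - b_2$) and then \emph{subtracts}, which is exactly the same operation as your \emph{adding} the two equations in the orientation you wrote them. The resulting cancellation of $a_2+c_2$ and the factorization $(a_1+c_1)(b_1-d_1)$ are the same in both.
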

 
 \begin{proof}
 The two colors give us the following relations:
 \begin{align*}
 a_1b_1 - a_2 - b_2 &= c_1d_1 - c_2 - d_2\\
 a_1d_1 - a_2 - d_2 &= c_1b_1 - b_2 - b_2.
 \end{align*}
 We subtract the second equation from the first to get the desired equation.
 \end{proof}
 
\begin{figure}
     \begin{subfigure}[b]{0.2\textwidth}
          \centering
          \resizebox{\linewidth}{!}{
          \begin{tikzpicture}
		\draw[very thick, black] (1.0, 0.0) -- (0, 1.0);
		\draw[very thick, black] (0, 1.0) -- (-1.0, 0);
		\draw[very thick, black] (-1.0, 0) -- (0, -1.0);
		\draw[very thick, black] (0, -1) -- (1, 0);

		\filldraw [black] (1.0, 0.0) circle (1pt);
		\node[right] at (1,0) {$a$};
		\filldraw [black] (0, 1.0) circle (1pt);
		\node[above] at (0,1) {$b$};
		\filldraw [black] (-1.0, 0) circle (1pt);
		\node[left] at (-1,0) {$c$};
		\filldraw [black] (0, -1.0) circle (1pt);
		\node[below] at (0,-1) {$d$};
		
	\end{tikzpicture}
					}
          \caption{}
          \label{fig:DM1}
     \end{subfigure}
     \begin{subfigure}[b]{0.2\textwidth}
          \centering
          \resizebox{\linewidth}{!}{
        \begin{tikzpicture}
		\draw[very thick, red] (0, 1) -- (-1, 0);
		\draw[very thick, red] (0, 1) -- (0, -1);
		\draw[very thick, black] (1.0, 0.0) -- (0, 1.0);
		\draw[very thick, black] (1, 0) -- (-1.0, 0);
		\draw[very thick, black] (1, 0) -- (0, -1.0);

		\filldraw [black] (1.0, 0.0) circle (1pt);
		\node[right] at (1,0) {$a$};
		\filldraw [black] (0, 1.0) circle (1pt);
		\node[above] at (0,1) {$b$};
		\filldraw [black] (-1.0, 0) circle (1pt);
		\node[left] at (-1,0) {$c$};
		\filldraw [black] (0, -1.0) circle (1pt);
		\node[below] at (0,-1) {$d$};
		
	\end{tikzpicture}
          }
          \caption{}
          \label{fig:DM2}
     \end{subfigure}
\begin{subfigure}[b]{0.2\textwidth}
          \centering
          \resizebox{\linewidth}{!}{
\begin{tikzpicture}
\draw[very thick, black] (1.0, 0.0) -- (0.30901699437494745, 0.95105651629515353);
\draw[very thick, red] (0.30901699437494745, 0.95105651629515353) -- (-0.80901699437494734, 0.58778525229247325);
\draw[very thick, black] (-0.80901699437494734, 0.58778525229247325) -- (-0.80901699437494745, -0.58778525229247303);
\draw[very thick, red] (1.0, 0.0) -- (-0.80901699437494745, -0.58778525229247303);

\draw[very thick, blue] (0.30901699437494723, -0.95105651629515364) -- (0.30901699437494745, 0.95105651629515353);
\draw[very thick, blue] (0.30901699437494723, -0.95105651629515364) -- (-0.80901699437494745, -0.58778525229247303);

\draw[very thick, green] (0.30901699437494723, -0.95105651629515364) -- (1.0, 0.0);
\draw[very thick, green] (0.30901699437494723, -0.95105651629515364) -- (-0.80901699437494734, 0.58778525229247325);

\filldraw [black] (1.0, 0.0) circle (1pt);
\node[right] at (1,0) {$a$};

\filldraw [black] (0.30901699437494745, 0.95105651629515353) circle (1pt);
\node[above] at (0.30901699437494745, 0.95105651629515353) {$b$};

\filldraw [black] (-0.80901699437494734, 0.58778525229247325) circle (1pt);
\node[left] at (-0.80901699437494734, 0.58778525229247325) {$c$};

\filldraw [black] (-0.80901699437494745, -0.58778525229247303) circle (1pt);
\node[left] at (-0.80901699437494745, -0.58778525229247303) {$d$};

\filldraw [black] (0.30901699437494723, -0.95105651629515364) circle (1pt);
\node[below] at (0.30901699437494723, -0.95105651629515364) {$e$};
\end{tikzpicture}
}
\caption{}
\label{fig:C4alg1}
\end{subfigure}
\begin{subfigure}[b]{0.2\textwidth}
          \centering
          \resizebox{\linewidth}{!}{
\begin{tikzpicture}
\draw[very thick, black] (1.0, 0.0) -- (0.30901699437494745, 0.95105651629515353);
\draw[very thick, red] (0.30901699437494745, 0.95105651629515353) -- (-0.80901699437494734, 0.58778525229247325);
\draw[very thick, black] (-0.80901699437494734, 0.58778525229247325) -- (-0.80901699437494745, -0.58778525229247303);
\draw[very thick, red] (1.0, 0.0) -- (-0.80901699437494745, -0.58778525229247303);

\draw[very thick, blue] (0.30901699437494723, -0.95105651629515364) -- (1,0);
\draw[very thick, blue] (-0.80901699437494734, 0.58778525229247325) -- (1,0);

\draw[very thick, red] (0.30901699437494723, -0.95105651629515364) -- (0.30901699437494745, 0.95105651629515353);
\draw[very thick, red] (0.30901699437494723, -0.95105651629515364) -- (-0.80901699437494745, -0.58778525229247303);

\filldraw [black] (1.0, 0.0) circle (1pt);
\node[right] at (1,0) {$a$};

\filldraw [black] (0.30901699437494745, 0.95105651629515353) circle (1pt);
\node[above] at (0.30901699437494745, 0.95105651629515353) {$b$};

\filldraw [black] (-0.80901699437494734, 0.58778525229247325) circle (1pt);
\node[left] at (-0.80901699437494734, 0.58778525229247325) {$c$};

\filldraw [black] (-0.80901699437494745, -0.58778525229247303) circle (1pt);
\node[left] at (-0.80901699437494745, -0.58778525229247303) {$d$};

\filldraw [black] (0.30901699437494723, -0.95105651629515364) circle (1pt);
\node[below] at (0.30901699437494723, -0.95105651629515364) {$e$};
\end{tikzpicture}
}
\caption{}
\label{fig:C4alg2}
\end{subfigure}

\begin{subfigure}[b]{0.2\textwidth}
          \centering
          \resizebox{\linewidth}{!}{
\begin{tikzpicture}
\draw[very thick, black] (1.0, 0.0) -- (0.30901699437494745, 0.95105651629515353);
\draw[very thick, red] (0.30901699437494745, 0.95105651629515353) -- (-0.80901699437494734, 0.58778525229247325);
\draw[very thick, black] (-0.80901699437494734, 0.58778525229247325) -- (-0.80901699437494745, -0.58778525229247303);
\draw[very thick, red] (1.0, 0.0) -- (-0.80901699437494745, -0.58778525229247303);

\draw[very thick, red] (0.30901699437494723, -0.95105651629515364) -- (0.30901699437494745, 0.95105651629515353);
\draw[very thick, black] (0.30901699437494723, -0.95105651629515364) -- (-0.80901699437494745, -0.58778525229247303);

\filldraw [black] (1.0, 0.0) circle (1pt);
\node[right] at (1,0) {$a$};

\filldraw [black] (0.30901699437494745, 0.95105651629515353) circle (1pt);
\node[above] at (0.30901699437494745, 0.95105651629515353) {$b$};

\filldraw [black] (-0.80901699437494734, 0.58778525229247325) circle (1pt);
\node[left] at (-0.80901699437494734, 0.58778525229247325) {$c$};

\filldraw [black] (-0.80901699437494745, -0.58778525229247303) circle (1pt);
\node[left] at (-0.80901699437494745, -0.58778525229247303) {$d$};

\filldraw [black] (0.30901699437494723, -0.95105651629515364) circle (1pt);
\node[below] at (0.30901699437494723, -0.95105651629515364) {$e$};
\end{tikzpicture}
}
\caption{}
\label{fig:C4alg3}
\end{subfigure}
\begin{subfigure}[b]{0.2\textwidth}
          \centering
          \resizebox{\linewidth}{!}{
\begin{tikzpicture}
\draw[very thick, black] (1.0, 0.0) -- (0.30901699437494745, 0.95105651629515353);
\draw[very thick, red] (0.30901699437494745, 0.95105651629515353) -- (-0.80901699437494734, 0.58778525229247325);
\draw[very thick, red] (-0.80901699437494734, 0.58778525229247325) -- (-0.80901699437494745, -0.58778525229247303);
\draw[very thick, black] (1.0, 0.0) -- (-0.80901699437494745, -0.58778525229247303);

\draw[very thick, blue] (0.30901699437494723, -0.95105651629515364) -- (1,0);
\draw[very thick, blue] (-0.80901699437494734, 0.58778525229247325) -- (1,0);

\draw[very thick, black] (0.30901699437494723, -0.95105651629515364) -- (0.30901699437494745, 0.95105651629515353);
\draw[very thick, red] (0.30901699437494723, -0.95105651629515364) -- (-0.80901699437494745, -0.58778525229247303);

\filldraw [black] (1.0, 0.0) circle (1pt);
\node[right] at (1,0) {$a$};

\filldraw [black] (0.30901699437494745, 0.95105651629515353) circle (1pt);
\node[above] at (0.30901699437494745, 0.95105651629515353) {$b$};

\filldraw [black] (-0.80901699437494734, 0.58778525229247325) circle (1pt);
\node[left] at (-0.80901699437494734, 0.58778525229247325) {$c$};

\filldraw [black] (-0.80901699437494745, -0.58778525229247303) circle (1pt);
\node[left] at (-0.80901699437494745, -0.58778525229247303) {$d$};

\filldraw [black] (0.30901699437494723, -0.95105651629515364) circle (1pt);
\node[below] at (0.30901699437494723, -0.95105651629515364) {$e$};
\end{tikzpicture}
}
\caption{}
\label{fig:ALG1}
\end{subfigure}
\begin{subfigure}[b]{0.2\textwidth}
          \centering
          \resizebox{\linewidth}{!}{
\begin{tikzpicture}
\draw[very thick, black] (1.0, 0.0) -- (0.30901699437494745, 0.95105651629515353);
\draw[very thick, red] (0.30901699437494745, 0.95105651629515353) -- (-0.80901699437494734, 0.58778525229247325);
\draw[very thick, red] (-0.80901699437494734, 0.58778525229247325) -- (-0.80901699437494745, -0.58778525229247303);
\draw[very thick, black] (1.0, 0.0) -- (-0.80901699437494745, -0.58778525229247303);

\draw[very thick, black] (0.30901699437494723, -0.95105651629515364) -- (0.30901699437494745, 0.95105651629515353);
\draw[very thick, red] (0.30901699437494723, -0.95105651629515364) -- (-0.80901699437494745, -0.58778525229247303);

\draw[very thick, blue] (0.30901699437494723, -0.95105651629515364) -- (1,0);
\draw[very thick, blue] (0.30901699437494723, -0.95105651629515364) -- (-0.80901699437494734, 0.58778525229247325);

\filldraw [black] (1.0, 0.0) circle (1pt);
\node[right] at (1,0) {$a$};

\filldraw [black] (0.30901699437494745, 0.95105651629515353) circle (1pt);
\node[above] at (0.30901699437494745, 0.95105651629515353) {$b$};

\filldraw [black] (-0.80901699437494734, 0.58778525229247325) circle (1pt);
\node[left] at (-0.80901699437494734, 0.58778525229247325) {$c$};

\filldraw [black] (-0.80901699437494745, -0.58778525229247303) circle (1pt);
\node[left] at (-0.80901699437494745, -0.58778525229247303) {$d$};

\filldraw [black] (0.30901699437494723, -0.95105651629515364) circle (1pt);
\node[below] at (0.30901699437494723, -0.95105651629515364) {$e$};
\end{tikzpicture}
}
\caption{}
\label{fig:ALG2}
\end{subfigure}

\caption{Configurations eliminated by the modified algebraic coloring.}
\label{algconf}
\end{figure}
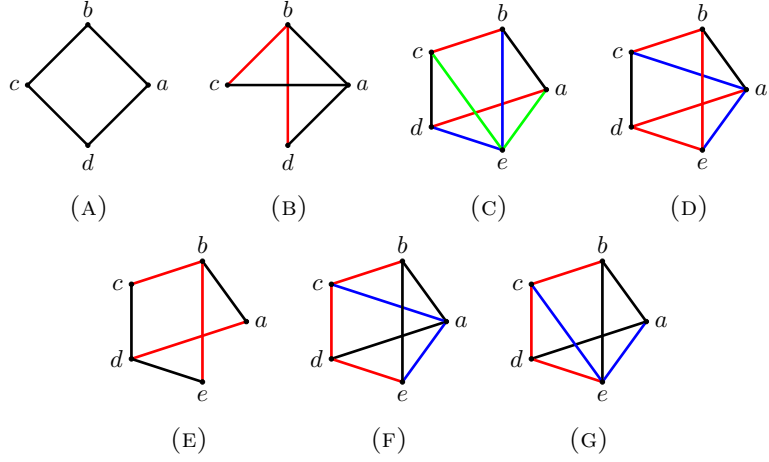

\begin{lemma}
Let $a,b,c,d,e$ be distinct vertices. Then $\mathcal{C}(ab)=\mathcal{C}(cd)$, $\mathcal{C}(bc)=\mathcal{C}(ad)$, $\mathcal{C}(ae)=\mathcal{C}(ce)$, and $\mathcal{C}(be)=\mathcal{C}(de)$ (see Figure~\ref{fig:C4alg1}) is forbidden by the coloring $\mathcal{C} = \varphi \times \chi$.
\end{lemma}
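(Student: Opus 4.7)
The plan is to derive a contradiction by combining the algebraic relations forced by the $\chi_1$-part of the configuration, the midpoint structure used to define $\chi_2$, and the order-sensitive Lemma~\ref{order} from the CFLS side.

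First, I would extract two linear equations from the two ``cherries" at $e$. From $\chi_1(ae) = \chi_1(ce)$ we obtain $e_1(a_1 - c_1) = a_2 - c_2$, and Lemma~\ref{basic1} applied at the common endpoint $e$ gives $a_1 \neq c_1$. Analogously, $\chi_1(be) = \chi_1(de)$ yields $e_1(b_1 - d_1) = b_2 - d_2$ with $b_1 \neq d_1$. Next, I would apply Lemma~\ref{altC4lemma} to the alternating $C_4$ on $a,b,c,d$ (available from $\chi(ab) = \chi(cd)$ and $\chi(bc) = \chi(ad)$) to get
\[
(a_1 + c_1)(b_1 - d_1) = 2(b_2 - d_2).
\]
Substituting $b_2 - d_2 = e_1(b_1 - d_1)$ and cancelling the nonzero factor $b_1 - d_1$ gives the key identity $a_1 + c_1 = 2 e_1$. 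But this is precisely the edge relation defining the matching $G_{e_1}$, so $a_1 c_1 \in E(G_{e_1})$, which forces $f_{e_1}(a_1) \neq f_{e_1}(c_1)$.

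I would then combine this inequality with the $\chi_2(ae) = \chi_2(ce)$ requirement to finish. Since $\chi_2(xy) = (f_{x_1}(y_1), f_{y_1}(x_1))$ for $x < y$, I case-split on the relative order of $a$, $c$, and $e$. If $a$ and $c$ lie on the same side of $e$ in the linear order, then the coordinate of $\chi_2$ containing $f_{e_1}$ occupies the same position on both edges $ae$ and $ce$, so $\chi_2(ae) = \chi_2(ce)$ directly forces $f_{e_1}(a_1) = f_{e_1}(c_1)$, contradicting what was just derived. The only remaining possibility is that $e$ lies strictly between $a$ and $c$; here Lemma~\ref{order} applied to the ordered triple $\{a, e, c\}$ gives $\varphi(ae) \neq \varphi(ce)$, contradicting $\mathcal{C}(ae) = \mathcal{C}(ce)$.

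The main obstacle is the order-sensitivity of $\chi_2$, which splits the argument into subcases depending on which endpoint is smaller. The clean point that makes everything fit is that the algebraic identity $a_1 + c_1 = 2e_1$ depends only on $\chi_1$ and on the alternating $C_4$ structure on $a,b,c,d$, and is independent of any ordering; the linear order only enters at the very last step, where Lemma~\ref{order} plugs the single gap that the matching-based argument for $\chi_2$ alone does not cover.
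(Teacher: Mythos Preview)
Your argument is correct and follows essentially the same route as the paper: combine Lemma~\ref{altC4lemma} with the relation coming from $\chi(be)=\chi(de)$ to get $a_1+c_1=2e_1$, deduce $f_{e_1}(a_1)\neq f_{e_1}(c_1)$, and then use Lemma~\ref{order} together with the definition of $\chi_2$ to obtain the contradiction. The only differences are cosmetic: the equation you extract from $\chi_1(ae)=\chi_1(ce)$ is never used, and the paper applies Lemma~\ref{order} first (forcing $a,c$ on the same side of $e$) rather than splitting into cases, but the logic is the same.
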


\begin{proof}
By Lemma~\ref{altC4lemma}, we know that \[(a_1+c_1)(b_1-d_1) = 2(b_2-d_2),\] and from $\chi(be)=\chi(de)$ we get that \[e_1(b_1-d_1) = b_2-d_2.\] Therefore,
\begin{align*}
(a_1+c_1)(b_1-d_1) &= 2e_1(b_1-d_1)\\
a_1+c_1 &= 2e_1
\end{align*}
since $b_1 \neq d_1$ by Lemma~\ref{basic1}. So $f_{e_1}(a_1) \neq f_{e_1}(c_1)$. By Lemma~\ref{order}, $\varphi(ae)=\varphi(ce)$ implies that either $a,c<e$ or $e<a,c$. In either case, $\chi_2(ae) \neq \chi_2(ce)$, a contradiction.
\end{proof}

\begin{lemma}
Let $a,b,c,d,e$ be distinct vertices. Then $\mathcal{C}(ab)=\mathcal{C}(cd)$, $\mathcal{C}(bc)=\mathcal{C}(ad) = \mathcal{C}(be) = \mathcal{C}(de)$, and $\mathcal{C}(ac)=\mathcal{C}(ae)$ (see Figure~\ref{fig:C4alg2}) is forbidden by the coloring $\mathcal{C} = \varphi \times \chi$.
\end{lemma}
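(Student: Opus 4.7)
The plan is to reduce this configuration to the same matching contradiction used in the preceding lemma for Figure~\ref{fig:C4alg1}, but with the pair $\{b,d\}$ playing the role that $\{a,c\}$ played there. I will first derive $a_1 + c_1 = 2e_1$ from the alternating $C_4$ on $abcd$ together with $\chi(be) = \chi(de)$; then upgrade this to $b_1 + d_1 = 2e_1$ by proving $a_1 = b_1$ and $c_1 = d_1$; and finally contradict the resulting fact $f_{e_1}(b_1) \neq f_{e_1}(d_1)$ using $\chi_2(be) = \chi_2(de)$.

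For the first step, Lemma~\ref{altC4lemma} applied to the alternating $C_4$ on $a,b,c,d$ gives $(a_1+c_1)(b_1-d_1) = 2(b_2-d_2)$, while expanding $\chi_1(be) = \chi_1(de)$ yields $e_1(b_1-d_1) = b_2 - d_2$. Lemma~\ref{basic1} applied to $\chi(be) = \chi(de)$ forces $b_1 \neq d_1$, so one may divide out to obtain $a_1 + c_1 = 2e_1$.

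For the upgrade, I will use the two other pairs of same-color edges meeting at a common vertex. Expanding $\chi_1(ac) = \chi_1(ae)$ produces $a_1(c_1 - e_1) = c_2 - e_2$, and expanding $\chi_1(bc) = \chi_1(be)$ produces $b_1(c_1 - e_1) = c_2 - e_2$; combined with $c_1 \neq e_1$ (Lemma~\ref{basic1} on $\chi(ac) = \chi(ae)$), these force $a_1 = b_1$. The $\delta$-coordinate of $\chi_1(ab) = \chi_1(cd)$ then reads $\delta(c_1,d_1) = \delta(a_1,b_1) = 0$, giving $c_1 = d_1$. Adding these yields $b_1 + d_1 = a_1 + c_1 = 2e_1$.

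Finally, since $b_1 \neq d_1$ and $b_1 + d_1 = 2e_1$, the elements $b_1$ and $d_1$ form a matched pair in the graph $G_{e_1}$, so $f_{e_1}(b_1) \neq f_{e_1}(d_1)$. On the other hand, Lemma~\ref{order} applied to $\varphi(be) = \varphi(de)$ forces $b$ and $d$ to lie on the same side of $e$ in the linear order; in either subcase ($b,d<e$ or $e<b,d$), equating the appropriate coordinate of $\chi_2(be)$ with that of $\chi_2(de)$ forces $f_{e_1}(b_1) = f_{e_1}(d_1)$, a contradiction. The main obstacle I anticipate is the upgrade step: unlike the previous lemma, where the matching relation was immediately on the two vertices joined to $e$, here the sum relation must be transferred from $\{a,c\}$ to $\{b,d\}$, and this is precisely where the $\delta$-coordinate of $\chi_1$ does essential work.
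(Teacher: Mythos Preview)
Your proof is correct and follows essentially the same route as the paper: derive $a_1+c_1=2e_1$ from the alternating $C_4$ and $\chi(be)=\chi(de)$, establish $a_1=b_1$ (the paper cites Lemma~\ref{basic2} directly, whereas you unfold its proof inline from $\chi_1(ac)=\chi_1(ae)$ and $\chi_1(bc)=\chi_1(be)$), use the $\delta$-coordinate of $\chi_1(ab)=\chi_1(cd)$ to get $c_1=d_1$, and then finish with the $\chi_2$/Lemma~\ref{order} contradiction on $b,d,e$. No gaps.
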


\begin{proof}
As in the previous proof we get that $a_1+c_1 = 2e_1$. By Lemma~\ref{basic2} we know that $b_1 = a_1$. Therefore, it follows from the second part of $\chi_1$ that $c_1=d_1$. Therefore, $b_1+d_1 = 2e_1$ and so $f_{e_1}(b_1) \neq f_{e_1}(d_1)$. As before, this fact along with Lemma~\ref{order} forces $\chi_2(be) \neq \chi_2(de)$, a contradiction.
\end{proof}

\begin{lemma}
Let $a,b,c,d,e$ be distinct vertices. Then $\chi(ab)=\chi(cd) = \chi(de)$ and $\chi(bc)=\chi(ad) = \chi(be)$ (see Figure~\ref{fig:C4alg3}) is forbidden by the algebraic coloring $\chi$.
\end{lemma}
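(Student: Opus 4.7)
The plan is to exploit the fact that the configuration contains two alternating $C_4$s in the same pair of colors. Setting $\alpha := \chi(ab)$ and $\beta := \chi(bc)$, the hypotheses $\chi(ab)=\chi(cd)=\chi(de)=\alpha$ and $\chi(bc)=\chi(ad)=\chi(be)=\beta$ imply that both $a\text{-}b\text{-}c\text{-}d\text{-}a$ and $a\text{-}b\text{-}e\text{-}d\text{-}a$ are $4$-cycles whose edges alternate between $\alpha$ and $\beta$. Noticing this second alternating cycle, which uses the vertex $e$ in place of $c$, is the only slightly nonobvious step; once it is in view, the rest of the argument is essentially a short calculation driven by Lemma~\ref{altC4lemma}.

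I would then apply Lemma~\ref{altC4lemma} to each of the two alternating $C_4$s. The first application (to $a,b,c,d$) yields $(a_1+c_1)(b_1-d_1) = 2(b_2-d_2)$, and the second (to $a,b,e,d$, with $e$ playing the role that $c$ played before) yields $(a_1+e_1)(b_1-d_1) = 2(b_2-d_2)$. Subtracting the two identities gives $(c_1-e_1)(b_1-d_1) = 0$. Since the edges $bc$ and $be$ share the vertex $b$ and carry the same color, Lemma~\ref{basic1} forces $c_1 \neq e_1$, and therefore $b_1 = d_1$.

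Finally, I would substitute $b_1 = d_1$ back into either of the displayed identities to get $2(b_2-d_2) = 0$. Because $q$ is an odd prime power, $2$ is invertible in $\mathbb{F}_q$, so $b_2 = d_2$; combined with $b_1 = d_1$ this means $b = d$, contradicting distinctness of the five vertices. The proof uses only the algebraic part $\chi_1$ of the coloring (no appeal to the refinement $\chi_2$ or to any linear order), and the only place where any real care is needed is correctly bookkeeping the two applications of Lemma~\ref{altC4lemma} so that their right-hand sides agree and can be canceled.
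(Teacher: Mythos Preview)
Your proof is correct. The paper's argument reaches the same contradiction $b=d$ but gets to $b_1=d_1$ by a slightly more direct route: since $\chi(dc)=\chi(de)$ and $\chi(bc)=\chi(be)$, Lemma~\ref{basic2} (applied with $d$ and $b$ each seeing $c$ and $e$ in a single color) immediately gives $b_1=d_1$, and then a single application of Lemma~\ref{altC4lemma} to the cycle $abcd$ yields $2(b_2-d_2)=0$. Your approach instead applies Lemma~\ref{altC4lemma} twice (to $abcd$ and to $abed$) and subtracts, using Lemma~\ref{basic1} to rule out $c_1=e_1$; this works just as well and has the minor virtue of not invoking Lemma~\ref{basic2}, but it is one step longer than the paper's version.
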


\begin{proof}
By Lemma~\ref{altC4lemma} we know that \[(a_1+c_1)(b_1-d_1) = 2(b_2-d_2),\] and by Lemma~\ref{basic2} we know that $b_1=d_1$. Hence, $b_2-d_2=0$ and so $b=d$, a contradiction.
\end{proof}
 
 \begin{lemma}
Let $a,b,c,d,e$ be distinct vertices. Then $\chi(bc)=\chi(cd) = \chi(de)$, $\chi(eb)=\chi(ba)=\chi(ad)$, and $\chi(ac)=\chi(ae)$ (see Figure~\ref{fig:ALG1}) is forbidden by the algebraic coloring $\chi$.
\end{lemma}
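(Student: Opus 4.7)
The plan is to derive a contradiction using only the first-coordinate information from $\chi_1$, via two applications of Lemma~\ref{basic2} together with Lemma~\ref{basic1}. In particular, I expect that $\chi_2$ and the linear order play no role in this configuration, which distinguishes it from the earlier Figures~\ref{fig:C4alg1} and~\ref{fig:C4alg2}.

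First I would apply Lemma~\ref{basic2} twice. Taking the four vertices $\{c,a,b,d\}$, the hypotheses $\chi(cb)=\chi(cd)$ and $\chi(ab)=\chi(ad)$ (the latter because $\chi(ba)=\chi(ad)$) yield $c_1=a_1$. Taking instead $\{d,a,c,e\}$, the hypotheses $\chi(dc)=\chi(de)$ and $\chi(ac)=\chi(ae)$ yield $d_1=a_1$. So all three of $a_1, c_1, d_1$ coincide.

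Next I would apply Lemma~\ref{basic1} to $\chi(ab)=\chi(ad)$ to obtain $b_1\neq d_1$. Since $d_1=a_1$, this forces $b_1\neq a_1$, and therefore the sign part of $\chi_1$ satisfies $\delta(a_1,b_1)=1$. On the other hand, the equality $\chi_1(ab)=\chi_1(ad)$ forces equal second coordinates, that is $\delta(a_1,b_1)=\delta(a_1,d_1)=\delta(a_1,a_1)=0$, a contradiction.

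I do not anticipate any serious obstacle: the only nontrivial step is identifying the correct pairings of vertices when invoking Lemma~\ref{basic2}, after which the contradiction between Lemma~\ref{basic1} and the $\delta$ coordinate is immediate.
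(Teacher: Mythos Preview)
Your proof is correct and follows essentially the same route as the paper: derive $a_1=d_1$ from Lemma~\ref{basic2} applied to $\chi(dc)=\chi(de)$ and $\chi(ac)=\chi(ae)$, derive $b_1\neq d_1$ from Lemma~\ref{basic1}, and then use the $\delta$-coordinate of $\chi_1(ab)=\chi_1(ad)$ to force the contradiction. Your first application of Lemma~\ref{basic2} (yielding $c_1=a_1$) is valid but unnecessary---you never use it in reaching the contradiction, and the paper omits it.
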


\begin{proof}
By Lemma~\ref{basic1} we get that $b_1 \neq d_1$. By Lemma~\ref{basic2} we get that $a_1=d_1$. Therefore, since the color encodes equality in the first coordinate we see that $b_1=d_1$, a contradiction.
\end{proof}

\begin{lemma}
Let $a,b,c,d,e$ be distinct vertices. Then $\chi(bc)=\chi(cd) = \chi(de)$, $\chi(eb)=\chi(ba)=\chi(ad)$, and $\chi(ec)=\chi(ae)$ (see Figure~\ref{fig:ALG2}) is forbidden by the algebraic coloring $\chi$.
\end{lemma}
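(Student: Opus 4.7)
The configuration differs from that of Figure~\ref{fig:ALG1} only in the placement of the ``blue'' monochromatic pair at vertex $e$: here it is $\chi(ea)=\chi(ec)$ rather than $\chi(ac)=\chi(ae)$. The previous proof exploited the blue $P_3$ through $a$; in contrast, I would exploit the new blue $P_3$ through $e$ together with a pair of black/red stars on the four vertices $a,b,c,d$.

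The plan is to derive two incompatible statements about the first coordinates $a_1$ and $c_1$. First, I would observe that on the $4$-vertex subset $\{a,b,c,d\}$ the hypothesis gives two monochromatic stars sharing the same pair of ``leaves'': vertex $a$ is joined to $b$ and $d$ by edges of the black color ($\chi(ab)=\chi(ad)$), while vertex $c$ is joined to $b$ and $d$ by edges of the red color ($\chi(cb)=\chi(cd)$). Applying Lemma~\ref{basic2} with $a$ and $c$ in the role of the central vertices and $b,d$ in the role of the outer pair yields
\[
a_1 = c_1.
\]
Second, I would apply Lemma~\ref{basic1} at vertex $e$: since $\chi(ea)=\chi(ec)$, the two endpoints $a$ and $c$ must have distinct first coordinates, so
\[
a_1 \neq c_1,
\]
which contradicts the previous conclusion.

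There is no real obstacle here; the only thing to verify is that the two monochromatic stars $\chi(ab)=\chi(ad)$ and $\chi(cb)=\chi(cd)$ really are present in the hypothesis (they come respectively from the black path $e\text{--}b\text{--}a\text{--}d$ and the red path $b\text{--}c\text{--}d\text{--}e$), so that Lemma~\ref{basic2} actually applies on the correct four vertices. Once this is noted, the argument is a one-line application of Lemmas~\ref{basic1} and~\ref{basic2}; no appeal to the $\chi_2$ modification or to Lemma~\ref{altC4lemma} is needed.
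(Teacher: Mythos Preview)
Your proposal is correct and follows exactly the same approach as the paper's proof: apply Lemma~\ref{basic2} to the two stars $\chi(ab)=\chi(ad)$ and $\chi(cb)=\chi(cd)$ to obtain $a_1=c_1$, then apply Lemma~\ref{basic1} to $\chi(ea)=\chi(ec)$ to obtain $a_1\neq c_1$. The paper's proof is just the two-sentence version of what you wrote.
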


\begin{proof}
By Lemma~\ref{basic2} we get that $a_1=c_1$. By Lemma~\ref{basic1}, we get that $a_1 \neq c_1$, a contradiction.
\end{proof}

\appendix

\section{Algorithm for reducing cases}
\label{script}
The following algorithm is not difficult to verify, so we present it here without proof. The specific implementation we rely on is a Python script that can be found (with comments) at \url{http://homepages.math.uic.edu/~acamer4/EdgeColors56.py}. This particular script forbids monochromatic odd cycles and the edge-colorings shown in Figures~\ref{colorcycles},~\ref{TheP3s},~\ref{altC4s},~\ref{additional}, as well as Figures~\ref{fig:DM1} and~\ref{fig:DM2}. The output is seven edge-colored copies of $K_5$ that each contain one of the remaining configurations in Figure~\ref{algconf}.

Suppose we want to find every edge-coloring, up to isomorphism, of $K_n$ that uses at most $m$ colors and does not contain a copy of any $F \in \mathcal{F}$, a list of edge-colored complete graphs on $n$ or fewer vertices. The algorithm takes $\mathcal{F}$, $n$, and $m$ as input and returns a list $\mathcal{R}$ of edge-colorings of $K_n$ satisfying these requirements.

For each $k=3,\ldots,n$, the algorithm creates a list $L_k$ of acceptable edge-colorings of $K_k$ by adding a new vertex to each $K_{k-1}$ listed in $L_{k-1}$ (where $L_2$ is the list of exactly one $K_2$ with its single edge given color $1$), and then coloring the $k-1$ new edges in all possible ways from the color set $[m]$. For each graph in $L_{k-1}$ and each way to color the new edges, we test the resulting graph to see if it contains any of the forbidden edge-colorings. If it does, then we move on. If not, then we test it against the new list $L_k$ to see if it is isomorphic to any of the colorings of $K_k$ already on the list. If it is, then we move on. Otherwise, we add it to the list $L_k$. The algorithm terminates when it has tested all colorings of $K_n$.\\

\begin{algorithm}[H]
\SetAlgoLined
\KwData{number of vertices $n$; maximum number of colors $m$; list of forbidden colorings $\mathcal{F}$}
 
initialize $L_2$ as list containing one $K_2$ with its edges colored 1\;
\For{$k=3,\ldots,n$}{
initialize empty list $L_k$\;
\For{$H \in L_{k-1}$}{
\For{each function $f:[k-1] \rightarrow [m]$}{
let $G$ be $K_{k}$ with edge-colors same as $H$ on the first $k-1$ vertices and color $f(i)$ on edge $ki$ for $i=1,\ldots,k-1$\;
\If{$G$ contains no element of $\mathcal{F}$ and is isomorphic to no element of $L_k$}{add $G$ to the list $L_k$}
}
}
}
\Return{$L_n$}
 \caption{List all edge-colorings with no forbidden subcoloring}
\end{algorithm}

\bibliography{56construction}
\bibliographystyle{plain}

\end{document}